\begin{document}

\title[\hfilneg \hfil Unicity on entire function with respect to its differential-difference polynomials in several complex variables ]
{Unicity on entire function with respect to its differential-difference polynomials several complex variables}

\author[XiaoHuang Huang \hfil \hfilneg]
{XiaoHuang Huang}

\address{XiaoHuang Huang: Corresponding author\newline
Department of Mathematics, Southern University of Science and Technology, Shenzhen 518055, China}
\email{1838394005@qq.com}

\subjclass[2010]{32H30}
\keywords{ Uniqueness, entire functions, small functions, differential-differences polynomials}
\begin{abstract}
In this paper, we study the uniqueness of the differential-difference polynomials of entire functions on $\mathbb{C}^{n}$. We prove the following result: Let $f(z)$  be a  transcendental entire function on $\mathbb{C}^{n}$ of hyper-order less than $1$ and $g(z)=b_{-1}+\sum_{i=0}^{n}b_{i}f^{(k_{i})}(z+\eta_{i})$, where $b_{-1}$ and $b_{i} (i=0\ldots,n)$ are small meromorphic functions of $f$ on $\mathbb{C}^{n}$, $k_{i}\geq0 (i=0\ldots,n)$ are integers, and $\eta_{i} (i=0\ldots,n)$ are finite values.  Let $a_{1}(z)\not\equiv\infty, a_{2}(z)\not\equiv\infty$ be two distinct small meromorphic functions of $f(z)$ on $\mathbb{C}^{n}$. If $f(z)$ and $g(z)$ share $a_{1}(z)$ CM, and  $a_{2}(z)$ IM. Then either $f(z)\equiv g(z)$ or $a_{1}=2a_{2}=2$,
$$f(z)\equiv e^{2p}-2e^{p}+2,$$
and
$$g(z)\equiv e^{p},$$
where $p(z)$ is a non-constant entire function on $\mathbb{C}^{n}$. Especially, in the case of $g(z)=(\Delta_{\eta}^{n}f(z))^{k}$, we obtain $f(z)\equiv (\Delta_{\eta}^{n}f(z))^{k}$.
\end{abstract}

\maketitle
\numberwithin{equation}{section}
\newtheorem{theorem}{Theorem}[section]
\newtheorem{lemma}[theorem]{Lemma}
\newtheorem{remark}[theorem]{Remark}
\newtheorem{corollary}[theorem]{Corollary}
\newtheorem{example}[theorem]{Example}
\newtheorem{problem}[theorem]{Problem}
\allowdisplaybreaks

\section{Introduction }
In this paper, we assume that the reader is familiar with the basic notations of Nevanlinna's value distribution theory, see \cite{h3,y1,y2}. In the following, a meromorphic function $f(z)$ means meromorphic on $\mathbb{C}^{n}, n\in \mathbb{N}^{+}$. By $S(r,f)$, we denote any quantity satisfying $S(r,f)=o(T(r,f))$ as $r\rightarrow\infty$, outside of an exceptional set of finite linear or logarithmic measure.

Let $a$ be a complex numbers. We say that two nonconstant meromorphic functions $f(z)$ and $g(z)$ share value $a$ IM (CM) if $f(z)-a$ and $g(z)-a$ have the same zeros ignoring multiplicities (counting multiplicities).

For a given meromorphic function $f: \mathbb{C}^{n}\rightarrow\mathbb{P}^{1}$ and nonzero vector $\eta=(\eta^{1}, \eta^{2},\ldots,\eta^{n})\in\mathbb{C}^{n}\backslash {0}$, we define the shift by $f(z+\eta)$ and the difference operators by
$$\Delta_{\eta}f(z)=f(z^{1}+\eta^{1},\ldots,z^{n}+\eta^{n})-f(z^{1},\ldots,z^{n}),$$
$$\Delta_{\eta}f(z)=\Delta_{\eta}(\Delta_{\eta}^{n-1}f(z)),\quad n\in\mathbb{N},n\geq2,$$
where $z=(z^{1},\ldots,z^{n})\in\mathbb{C}^{n}$.

Suppose $|z|=(|z^{1}|^{2}+|z^{2}|^{2}+\cdots|z^{n}|^{2})^{\frac{1}{2}}$ for $z=(z^{1},z^{1},\ldots,z^{n})\in \mathbb{C}^{n}$. For $r>0$, denote
$$B_{n}(r):={z\in \mathbb{C}^{n}||z|<r}, \quad S_{n}(r):={z\in \mathbb{C}^{n}||z|=r}.$$
Let $d=\partial+\overline{\partial}$, $d^{c}=(4\pi\sqrt{-1})^{-1}(\partial-\overline{\partial})$. Then $dd^{c}=\frac{\sqrt{-1}}{2\pi}\partial\overline{\partial}$. We write
$$\omega_{n}(z):=(dd^{c}log|z|^{2}),\quad \sigma_{n}(z):=d^{c}log|z|^{2}\Lambda\omega_{n}^{n-1}(z),$$
for $z\in \mathbb{C}^{n}$ a nonzero complex number.
$$\upsilon_{n}(z)=dd^{c}|z|^{2}, \quad \rho_{n}(z)=\upsilon_{n}^{n}(z),$$
for $z\in\mathbb{C}$.

Thus $\sigma_{n}(z)$ defines a positive measure on $S_{n}(r)$ with total measure one and $\rho_{n}(z)$ is Lebesgue measure on $\mathbb{C}^{n}$ normalized such that $B_{n}(r)$ has measure $r^{2n}$. Moreover, when we restrict $\upsilon_{n}(z)$ to $S_{n}(r)$, we obtain that
$$\upsilon_{n}(z)=r^{2}\omega_{n}(z)\quad and \quad \int_{B_{n}(r)}\omega_{n}^{n}=1.$$

Let $f$ be a meromorphic function on $\mathbb{C}^{n}$, i.e., $f$ can be written as a quotient of two holomorphic functions which are relatively prime. Thus $f$ can be regarded as a meromorphic map $f: \mathbb{C}^{n}\rightarrow\mathbb{P}^{1}$ such that $f^{-1}(\infty)\neq\mathbb{C}^{n}$; i.i. $f(z)=[f_{0}(z),f_{1}(z)]$ and $f_{0}$  is not identity equal to zero. Clearly the meromorphic map $f$ is not defined on the set $I_{f}\{z\in\mathbb{C}^{n}; f_{0}(z)=f_{1}(z)=0\}$, which is called the set of indeterminacy of $f$, and $I_{f}$ is an analytic subvariety of $\mathbb{C}^{n}$ with codimension not less than $2$. Thus we can define, for $z \in \mathbb{C}^{n}\backslash I_{f}$,
$$f^{*}\omega=dd^{c}log(|f_{0}|^{2}+|f_{1}|^{2}),$$
where $\omega$ is the Fubini-Study form. Therefore, for any measurable set $X\subset \mathbb{C}^{n}$, integrations of $f$ over $X$ may be defined as integrations over $X\backslash I_{f}$.

For all $0<s<r$, the characteristic function of $f$ is defined by
$$T_{f}(r,s)=\int_{s}^{r}\frac{1}{t^{2n-1}}\int_{B_{n}(t)}f^{*}(\omega)\Lambda\omega_{n}^{n-1}dt.$$

Let $a\in \mathbb{P}^{1}$ with $f^{-1}(a)\neq\mathbb{C}^{n}$ and $Z_{a}^{f}$ be an $a-divisor$ of $f$. We write $Z_{a}^{f}(t)=\overline{\mathbb{B}}_{n}(t)\bigcap Z_{a}^{f}$. Then the pre-counting function and counting function with respect to $a$ are defined, respectively, as (if $0\not\in Z_{a}^{f}$)
$$n_{f}(t,a)=\int_{Z_{a}^{f}(t)\omega^{n-1}} \quad and \quad N_{f}(r,a)=\int_{0}^{r}n_{f}(t,a)\frac{dt}{t}.$$
Therefore Jensen's formula is, if $f(0)\neq0$, for all $r\in \mathbb{R}^{+}$,
$$N_{f}(r,0)-N_{f}(r,\infty)=\int_{S_{n}(r)}log|f(z)|\sigma_{n}(z)-loglog|f(0)|.$$

Let $a\in \mathbb{P}^{1}$ with $f^{-1}(a)\neq\mathbb{C}^{n}$, then we define the proximity function as
\begin{eqnarray*}
\begin{aligned}
m_{f}(r,a)&=\int_{S_{n}(r)}log^{+}\frac{1}{|f(z)-a|}\sigma_{n}(z), if \quad a\neq\infty;\\
&=\int_{S_{n}(r)}log^{+}|f(z)|\sigma_{n}(z), if \quad a=\infty.
\end{aligned}
\end{eqnarray*}
The first main theorem states that, if $f(0)\neq a,\infty$,
$$T_{f}(r,s)=N_{f}(r,s)+m_{f}(r,s)-log\frac{1}{|f(z)-a|}$$
where $0<s<r$.

In this paper, we write $N(r,f):=N_{f}(r,\infty)$, $N(r,\frac{1}{f}):=N_{f}(r,0)$, $m_{f}(r,0):=m(r,\frac{1}{f})$, $m_{f}(r,\infty):=m(r,f)$ and $T_{f}(r,s)=T(r,f)$. Hence $T(r,f)=m(r,f)+N(r,f)$. And we can deduce the First Fundamental Theorem of Nevanlinna on $\mathbb{C}^{n}$
\begin{align}
T(r,f)=T(r,\frac{1}{f-a})+O(1).
\end{align}
More details can be seen in \cite{r,y3}.

Furthermore, meromorphic functions $f$ on $\mathbb{C}^{n}$, we define
 $$\rho(f)=\varlimsup_{r\rightarrow\infty}\frac{log^{+}T(r,f)}{logr},$$
 $$\rho_{2}(f)=\varlimsup_{r\rightarrow\infty}\frac{log^{+}log^{+}T(r,f)}{logr}$$
by the order  and the hyper-order  of $f$, respectively.

A meromorphic function $f$ satisfying the condition
$$\varlimsup_{r\rightarrow\infty}\frac{logT(r,f)}{r}=0,$$
of above is said to be a meromorphic function with $\rho_{2}(f)<1$.

\section{Main results}

In 1977, Rubel and Yang \cite{ruy}  considered the uniqueness of an entire function and its derivative. They proved.

\

{\bf Theorem A} \ Let $f(z)$ be a transcendental entire function, and let $a, b$ be two finite distinct complex values. If $f(z)$ and $f'(z)$
 share $a, b$ CM, then $f(z)\equiv f'(z)$.

During 2006-2008, the difference analogue of the lemma on the logarithmic derivative and Nevanlinna theory for the difference operator have been founded, which bring about a number of papers $[3-8, 15-17]$ focusing on the uniqueness study of meromorphic functions sharing some values with their difference operators. Heittokangas et al \cite{hkl} obtained a similar result analogue of Theorem A concerning shifts.

\

{\bf Theorem B}
 Let $f(z)$ be a non-constant entire function of finite order, let $c$ be a nonzero finite complex value, and let $a, b$ be two finite distinct complex values.
If $f(z)$ and $f(z+c)$ share $a, b$ CM, then $f(z)\equiv f(z+c).$

With the establishment of logarithmic derivative lemma in several variables by A.Vitter \cite{v} in 1977, a number of papers about Nevanlinna Theory in several variables were published \cite{hy2,hy3,y3}. In 1996, Hu-Yang \cite{hy2} generalized  Theorem 1 in the case of higher dimension. They proved.

\

{\bf Theorem C} \ Let $f(z)$ be a transcendental entire function on $\mathbb{C}^{n}$, and let $a, b$$\in \mathbb{C}^{n}$ be two finite distinct complex values. If $f(z)$ and $D_{u}f(z)$ share $a, b$ CM, then $f(z)\equiv D_{u}f(z)$, where $D_{u}f(z)$ is a directional derivative of $f(z)$ along a direction $u\in S^{2n-1}$.

In recent years, there has been tremendous interests in developing  the value distribution of meromorphic functions with respect to difference analogue,  see [1-3, 5-10, 12-17, 21].
 Heittokangas et al \cite{hkl} proved a similar result analogue of Theorem A concerning shift.

\

{\bf Theorem D}
 Let $f(z)$ be a non-constant entire function of finite order, let $\eta$ be a nonzero finite complex value, and let $a_{1}, a_{2}$ be two finite distinct complex values. If $f(z)$ and $f(z+\eta)$ share $a_{1}, a_{2}$ CM, then $f(z)\equiv f(z+\eta).$

In 2014,  Liu-Yang-Fang \cite{lyf}  proved

\

{\bf Theorem E}
 Let $f$ be a transcendental entire function of finite order,  let $\eta$ be a non-zero complex number, $n$ be a positive integer,  and let $a_{1}, a_{2}$ be two finite distinct complex values. If $ f$ and $\Delta_{\eta}^{n}f$ share $a_{1}, a_{2}$ CM, then $ f\equiv \Delta_{\eta}^{n}f$.

Recently, we   proved

\

{\bf Theorem F}
 Let $f(z)$ be a  transcendental entire function of $\rho_{2}(f)<1$, let $\eta\neq0$ be a finite complex number, $n\geq1, k\geq0$  two  integers and let $a, b$ be two  distinct finite complex values. If $f(z)$ and $(\Delta_{\eta}^{n}f(z))^{(k)}$ share $a_{1}$ CM and  $a_{2}$ IM, then $f(z)\equiv(\Delta_{\eta}^{n}f(z))^{(k)}$.

In recent years, there has been tremendous interests in developing  the value distribution of meromorphic functions with respect to difference analogue in the case of higher dimension. Especially in 2020, Cao-Xu \cite{cx} established the difference analogue of the lemma in several variables, one can study some interesting uniqueness problems on meromorphic functions sharing values with their shift or difference operators corresponding to the uniqueness problems on meromorphic functions sharing values with their derivatives in several variables.

In this paper, we continuous to investigate Theorem F, and  obtain.
\

{\bf Theorem 1}
 Let $f(z)$ be a non-constant meromorphic function with $\rho_{2}(f)<1$ on $\mathbb{C}^{n}$ and $g(z)=b_{-1}+\sum_{i=0}^{n}b_{i}f^{(k_{i})}(z+\eta_{i})$, where $b_{-1}$ and $b_{i} (i=0\ldots,n)$ are small meromorphic functions of $f(z)$ on $\mathbb{C}^{n}$, $k_{i}\geq0 (i=0\ldots,n)$ are integers, and $\eta_{i} (i=0\ldots,n)$ are finite values.  Let $a_{1}(z)\not\equiv\infty, a_{2}(z)\not\equiv\infty$ be two distinct small meromorphic functions of $f(z)$ on $\mathbb{C}^{n}$. If $f(z)$ and $g(z)$ share $a_{1}(z)$ CM, and  $a_{2}(z)$ IM. Then either $f(z)\equiv g(z)$ or $a_{1}=2a_{2}=2$,
$$f(z)\equiv e^{2p}-2e^{p}+2,$$
and
$$g(z)\equiv e^{p},$$
where $p(z)$ is a non-constant entire function on $\mathbb{C}^{n}$.

\

{\bf Corollary 1}
Let $f(z)$ be a non-constant entire function  of $\rho_{2}(f)<1$ on $\mathbb{C}^{n}$, and let $a_{1}$ and $a_{2}$ be two distinct small functions of $f$ on $\mathbb{C}^{n}$. If $f(z)$ and $(\Delta_{\eta}^{n} f(z))^{(k)}$ share $a_{1}(z)$ CM, and  $a_{2}(z)$ IM. Then either $f(z)\equiv (\Delta_{\eta}^{n} f(z))^{(k)}$ .

\

{\bf Remark 1} For convenience, through out the paper, $o(T(r,f))$ always means that it holds for all $r\not\in E$ with
$$\overline{dens}E=lim_{r\rightarrow\infty}sup\frac{1}{r}\int_{E\bigcap [1,r]}dt=0.$$
We will not repeat a long sentence as above.  

\section{Some Lemmas}
\begin{lemma}\label{21l} \cite {cx}
Let $f$ be a nonconstant meromorphic function on $\mathbb{C}^{n}$, let $\eta\in \mathbb{C}^{n}$ be a nonzero finite complex number. If
$$\varlimsup_{r\rightarrow\infty}\frac{logT(r,f)}{r}=0,$$
then
$$m(r,\frac{f(z+\eta)}{f(z)})+m(r,\frac{f(z)}{f(z+\eta)})=o(T(r,f)).$$
\end{lemma}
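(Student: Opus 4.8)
The statement is the difference analogue of the lemma on the logarithmic derivative in several variables, so the proximity function of the shift quotient must be absorbed into the error term $o(T(r,f))$ precisely under the hyper-order hypothesis $\varlimsup_{r\to\infty}\frac{\log T(r,f)}{r}=0$. The plan is to follow the Chiang--Feng / Halburd--Korhonen scheme, replacing the one-variable Poisson--Jensen formula by its several-variable counterpart on the ball $B_n(R)$. Concretely, for $R>r$ I would write, for $z$ with $|z|=r$,
\[
\log|f(z)|=\int_{S_n(R)}P_R(z,w)\log|f(w)|\,\sigma_n(w)-\sum_{a}G_R(z,a)+\sum_{b}G_R(z,b),
\]
where $P_R$ is the Poisson kernel of $B_n(R)$, $G_R(z,\cdot)$ its Green function, and the two sums run over the zeros $a$ and poles $b$ of $f$ in $B_n(R)$ counted with multiplicity. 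Subtracting this identity at $z+\eta$ and at $z$ yields a pointwise bound for $\log^+\bigl|f(z+\eta)/f(z)\bigr|$ in terms of (i) a boundary term built from the kernel difference $P_R(z+\eta,w)-P_R(z,w)$ and (ii) zero/pole terms built from $G_R(z+\eta,a)-G_R(z,a)$.

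The next step is to estimate the two kernel differences. Since $\eta$ is fixed and $|z|=r<R$, a gradient estimate for the Poisson kernel on the ball gives a bound for $|P_R(z+\eta,w)-P_R(z,w)|$ of the form $C|\eta|\,R^{2n-1}(R-r)^{-2n}$ uniformly in $w\in S_n(R)$; integrating the boundary term against $\sigma_n(w)$ then produces a quantity controlled by $\frac{C|\eta|R^{2n-1}}{(R-r)^{2n}}\,T(R,f)$. For the Green terms one integrates the pointwise sums over $z\in S_n(r)$ and applies Fubini to convert them, via the definition of the counting functions, into expressions bounded by $O\bigl(N(R,f)+N(R,1/f)\bigr)=O(T(R,f))$, together with correction terms coming from those zeros/poles $a$ lying abnormally close to $z$ or to $z+\eta$. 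Controlling these \emph{close-encounter} contributions --- where $G_R(z+\eta,a)-G_R(z,a)$ is large because $a$ is near the singularity of the Green function --- is the technical heart of the argument and is handled by a measure estimate on the set of directions $z\in S_n(r)$ for which such near-coincidences occur.

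Finally I would integrate the assembled pointwise bound over $S_n(r)$ against $\sigma_n$, obtaining
\[
m\Bigl(r,\tfrac{f(z+\eta)}{f(z)}\Bigr)\leq \frac{C|\eta|R^{2n-1}}{(R-r)^{2n}}\,T(R,f)+O\bigl(\log^+ T(R,f)\bigr)+O(1),
\]
and then optimize the free radius $R$. Choosing $R=r+\psi(r)$ with $\psi(r)$ tending to $0$ slowly and invoking a Borel-type growth lemma --- valid exactly when $\varlimsup_{r\to\infty}\frac{\log T(r,f)}{r}=0$, i.e. $\rho_2(f)<1$ --- gives $T(R,f)=(1+o(1))T(r,f)$ for $r$ outside a set of zero upper density, so that the whole right-hand side becomes $o(T(r,f))$. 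Applying the identical argument with $\eta$ replaced by $-\eta$ (equivalently, to $f(z)/f(z+\eta)$) and adding the two estimates yields the symmetric conclusion.

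The main obstacle, as indicated, is the simultaneous control of (a) the close-encounter zero/pole terms under integration over the sphere and (b) the interplay between the radius gap $R-r$ and the hyper-order growth: the boundary term forces $R-r$ not to be too small, while the Borel-type comparison $T(R,f)=(1+o(1))T(r,f)$ forces $R-r$ not to be too large, and the hyper-order condition is precisely what makes a compatible choice possible while keeping the exceptional set of zero upper density. The extra care needed beyond the one-variable case is purely in the geometry of the several-variable Poisson and Green kernels and in the Fubini step that turns the higher-codimension zero/pole divisors into the scalar counting functions $N(R,f)$ and $N(R,1/f)$.
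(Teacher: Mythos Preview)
The paper does not prove this lemma at all; it is quoted from Cao--Xu \cite{cx} and used as a black box throughout. Your sketch therefore goes well beyond what the paper offers, and in broad outline it matches the approach actually taken in \cite{cx} (which in turn extends Korhonen's several-variable argument \cite{k}): a Poisson--Jensen representation on the ball $B_n(R)$, differencing at $z$ and $z+\eta$, gradient estimates for the Poisson kernel, and a Borel-type growth lemma that uses the hypothesis $\varlimsup_{r\to\infty}\frac{\log T(r,f)}{r}=0$ to compare $T(R,f)$ with $T(r,f)$ outside an exceptional set of zero upper density.

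One technical point to correct in your write-up: in $\mathbb{C}^n$ with $n\ge 2$ the zeros and poles of $f$ are not isolated points but codimension-one analytic divisors, so the Green-function contribution in the Poisson--Jensen (equivalently, Lelong--Jensen) formula is an integral over the divisor, not a discrete sum $\sum_a G_R(z,a)$. Your later phrase ``higher-codimension zero/pole divisors'' is off for the same reason --- the divisors have codimension exactly one; it is the indeterminacy set $I_f$ that has codimension $\ge 2$. This does not derail the strategy, but the close-encounter analysis must be carried out for tubular neighbourhoods of the divisor rather than for balls around isolated points, and the Fubini step converting the Green contributions into $N(R,f)$ and $N(R,1/f)$ goes through the Lelong--Jensen formula. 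With those adjustments your plan is the correct one, and is precisely what the cited reference supplies.
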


\begin{lemma}\label{221}\cite{v} Let $f(z)$ is a non-constant meromorphic function on $\mathbb{C}^{n}$, and let $\upsilon=(\upsilon_{1},\ldots,\upsilon_{n})\in Z_{+}^{n}$ be a multi-index. Then for any $\varepsilon>0$,
$$m(r,\frac{\partial^{\upsilon}f}{f})\leq|\upsilon|log^{+}|T(r,f)|+|\upsilon|log^{+}|T(r,f)|+O(1)=o(T(r,f)).$$
\end{lemma}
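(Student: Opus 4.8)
The strategy is to reduce to a single first--order partial derivative and then to run a Poisson--Jensen argument on balls in $\mathbb{C}^{n}$, clearing the exceptional set by a Borel growth lemma at the end; this is in the spirit of Vitter \cite{v}. (A slicing argument --- restricting $f$ to complex lines, applying the classical one--variable lemma on the logarithmic derivative to each slice, and averaging over the family of lines via the integral--geometric formulas that underlie the definition of $T_{f}$ --- is an equally viable route.)

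\emph{Reduction.} By subadditivity of the proximity function, writing $\upsilon=\upsilon'+e_{j}$ and $h:=\partial^{\upsilon'}f$, one has $m(r,\partial^{\upsilon}f/f)\le m(r,\partial_{z_{j}}h/h)+m(r,\partial^{\upsilon'}f/f)+O(1)$; and since the poles of $h$ lie among those of $f$ with multiplicity raised by at most $|\upsilon'|$, $N(r,h)\le(|\upsilon'|+1)N(r,f)$, whence $T(r,h)\le(|\upsilon'|+1)T(r,f)+m(r,\partial^{\upsilon'}f/f)+O(1)$. Thus, once the first--order bound $m(r,\partial_{z_{k}}g/g)=O(\log^{+}T(r,g)+\log r)$ is known for every non--constant meromorphic $g$ on $\mathbb{C}^{n}$, an induction on $|\upsilon|$ yields simultaneously $m(r,\partial^{\alpha}f/f)=O(\log^{+}T(r,f)+\log r)$ and $T(r,\partial^{\alpha}f)=O(T(r,f))$ for all $\alpha$, and in particular the asserted $m(r,\partial^{\upsilon}f/f)=O(\log^{+}T(r,f)+\log r)=o(T(r,f))$. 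So it suffices to bound $m(r,\partial_{z_{j}}f/f)$.

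\emph{The first--order estimate.} For this I would fix $0<r<\rho$ and use the Poisson--Jensen representation on $B_{n}(\rho)$. Since $\log|f|^{2}$ is harmonic on $\mathbb{C}^{n}=\mathbb{R}^{2n}$ off the zero and pole sets of $f$, with $\Delta\log|f|^{2}$ (as a measure) proportional to the zero divisor minus the pole divisor, one has, for $|z|<\rho$ off those sets,
\begin{equation*}
\log|f(z)|^{2}=\int_{S_{n}(\rho)}P_{\rho}(z,\zeta)\log|f(\zeta)|^{2}\,\sigma_{n}(\zeta)-\int_{B_{n}(\rho)}G_{\rho}(z,w)\,d\nu_{f}(w),
\end{equation*}
where $P_{\rho}$ is the normalized Poisson kernel, $G_{\rho}>0$ the Green function of $B_{n}(\rho)$, and $\nu_{f}$ the signed divisor measure of $f$, with $|\nu_{f}|(B_{n}(\rho))$ controlled by $n_{f}(\rho,0)+n_{f}(\rho,\infty)\le CT(\rho',f)+O(1)$ for $\rho'$ slightly larger than $\rho$. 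Differentiating in $z_{j}$ and using the identity $\partial_{z_{j}}\log|f|^{2}=\partial_{z_{j}}f/f$ expresses $\partial_{z_{j}}f/f$ as $\int_{S_{n}(\rho)}\partial_{z_{j}}P_{\rho}(z,\zeta)\log|f(\zeta)|^{2}\,\sigma_{n}(\zeta)$ minus $\int_{B_{n}(\rho)}\partial_{z_{j}}G_{\rho}(z,w)\,d\nu_{f}(w)$. On $|z|=r$ one has the kernel bounds $|\partial_{z_{j}}P_{\rho}(z,\zeta)|\le\frac{C}{\rho-r}P_{\rho}(z,\zeta)$ and $|\partial_{z_{j}}G_{\rho}(z,w)|\le\frac{C}{|z-w|^{2n-1}}+(\text{a term bounded in }\rho,\ \rho-|w|,\ \rho-r)$. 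Taking $\log^{+}$, integrating over $S_{n}(r)$, pulling the average through $\log^{+}$ by concavity of $t\mapsto\log(1+t)$, and using $\int_{S_{n}(r)}P_{\rho}(z,\zeta)\,\sigma_{n}(z)=1$ together with $\int_{S_{n}(\rho)}|\log|f||\,\sigma_{n}\le 2T(\rho,f)+O(1)$ (the averaged Jensen formula set up above), one arrives at the two--radii estimate
\begin{equation*}
m\Bigl(r,\tfrac{\partial_{z_{j}}f}{f}\Bigr)\le C\Bigl(\log^{+}T(\rho',f)+\log\tfrac{\rho}{\rho-r}+\log^{+}\rho\Bigr)+O(1),
\end{equation*}
valid for all $r$ outside an exceptional set of finite linear measure --- the exceptional set being needed only to keep $\int_{B_{n}(\rho)}\int_{S_{n}(r)}|z-w|^{1-2n}\,\sigma_{n}(z)\,d|\nu_{f}|(w)$ under control when the divisor of $f$ comes close to $S_{n}(r)$; for $n=1$ this is the classical argument of Nevanlinna.

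\emph{Conclusion.} Choosing $\rho=r+1/T(r,f)$ gives $\log\frac{\rho}{\rho-r}=O(\log^{+}T(r,f)+\log r)$, and the Borel growth lemma replaces $T(\rho',f)$ by $2T(r,f)$ for $r$ outside a further set of finite linear (equivalently logarithmic) measure; hence $m(r,\partial_{z_{j}}f/f)=O(\log^{+}T(r,f)+\log r)$, and for transcendental $f$, where $\log r=o(T(r,f))$, this is $o(T(r,f))$. The reduction step then gives the full statement for every multi--index $\upsilon$. I expect the main obstacle to be the careful kernel analysis on $B_{n}(\rho)\subset\mathbb{R}^{2n}$ --- in particular bounding $\partial_{z_{j}}G_{\rho}$ near the diagonal and estimating the potential $\int_{S_{n}(r)}|z-w|^{1-2n}\,\sigma_{n}(z)$ precisely enough to pin down the exceptional set --- or, on the slicing route, establishing the Crofton--type identities matching the one--variable Nevanlinna functions of the slices with $T_{f}$ and $m_{f}$, and getting the radii in that matching right.
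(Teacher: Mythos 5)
This lemma is not proved in the paper at all: it is imported verbatim (with a citation to Vitter \cite{v}) and used as a black box, so there is no in-paper argument to compare yours against. Your outline does follow the route of the cited source and of the classical one-variable proof: reduce to a single first-order derivative by the product decomposition $\partial^{\upsilon}f/f=(\partial_{z_{j}}h/h)(\partial^{\upsilon'}f/f)$ together with the bound $T(r,\partial^{\upsilon'}f)=O(T(r,f))$, then obtain the first-order estimate from a Poisson--Jensen representation of $\log|f|$ on Euclidean balls in $\mathbb{R}^{2n}$, differentiate the kernels, and dispose of the two radii with a Borel-type growth lemma. The reduction step is correct (one minor wrinkle: $T(r,\partial^{\upsilon'}f)=O(T(r,f))$ must be arranged to hold for all $r$, not just off the exceptional set where $m(r,\partial^{\upsilon'}f/f)$ is controlled; this is handled by bounding $N(r,\partial^{\upsilon'}f)\le(|\upsilon'|+1)N(r,f)$ unconditionally and using monotonicity of $T$, as is standard).

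The genuine gap is exactly the one you flag yourself: the Green-potential term. Averaging $\log^{+}$ of $\int_{B_{n}(\rho)}|\partial_{z_{j}}G_{\rho}(z,w)|\,d|\nu_{f}|(w)$ over $S_{n}(r)$, with $|\partial_{z_{j}}G_{\rho}|\sim|z-w|^{1-2n}$ near the diagonal, is the entire technical content of the lemma; the interchange of $\log^{+}$ with the average, the estimate of $\int_{S_{n}(r)}|z-w|^{1-2n}\sigma_{n}(z)$ when the divisor approaches the sphere, and the resulting exceptional set are asserted rather than carried out, so as written this is a correct plan, not a proof. Two further points worth recording: the statement as printed in the paper is itself garbled --- the right-hand side repeats the term $|\upsilon|\log^{+}|T(r,f)|$, where one occurrence should be a $\log^{+}r$-type term, consistent with the bound $O(\log^{+}T(r,f)+\log r)$ your argument would actually produce; and the final equality ``$=o(T(r,f))$'' requires both the exceptional set and the exclusion of the rational case (for which $T(r,f)\asymp\log r$), which your conclusion correctly restricts to transcendental $f$ but the paper's statement does not.
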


\begin{lemma}\label{21l} \cite {cx}
Let $f$ be a nonconstant meromorphic function on $\mathbb{C}^{n}$, let $\eta\in \mathbb{C}^{n}$ be a nonzero finite complex number. If
$$\varlimsup_{r\rightarrow\infty}\frac{logT(r,f)}{r}=0,$$
then
$$T(r,f(z+\eta))=T(r,f(z))+o(T(r,f))$$
and
$$N(r,f(z+\eta))=N(r,f(z))+o(T(r,f)).$$
\end{lemma}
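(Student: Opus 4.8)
The plan is to treat the characteristic function and the counting function separately, controlling the proximity part by the difference logarithmic derivative lemma and the pole-counting part by a translation argument combined with a radius-shift growth estimate; the two assertions then follow from $T=m+N$.

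First I would handle the proximity function. Writing $F(z)=f(z+\eta)$ and using the subadditivity of $m$ together with the factorizations $F=f\cdot(F/f)$ and $f=F\cdot(f/F)$, one has
$$m(r,f(z+\eta))\leq m(r,f(z))+m\!\left(r,\frac{f(z+\eta)}{f(z)}\right),$$
$$m(r,f(z))\leq m(r,f(z+\eta))+m\!\left(r,\frac{f(z)}{f(z+\eta)}\right).$$
By the difference analogue of the logarithmic derivative lemma stated above (which gives $m(r,f(z+\eta)/f(z))+m(r,f(z)/f(z+\eta))=o(T(r,f))$ under the hypothesis $\varlimsup_{r\to\infty}\log T(r,f)/r=0$), both correction terms are $o(T(r,f))$. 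Hence $m(r,f(z+\eta))=m(r,f(z))+o(T(r,f))$.

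Next I would handle the counting function geometrically. A point $z_{0}$ is a pole of $f(z+\eta)$ precisely when $z_{0}+\eta$ is a pole of $f$, so the pole divisor of $f(z+\eta)$ is the $(-\eta)$-translate of that of $f$; counting poles of $f(z+\eta)$ in $\overline{B}_{n}(t)$ therefore counts poles of $f$ in the translated ball $\{w:|w-\eta|\leq t\}$. The triangle inequality yields the inclusions $\overline{B}_{n}(t-|\eta|)\subseteq\{w:|w-\eta|\leq t\}\subseteq\overline{B}_{n}(t+|\eta|)$, and since the divisor-counting form contributes only a controlled amount on the thin shells between consecutive balls, this gives
$$n_{f}(t-|\eta|,\infty)\leq n_{f(\cdot+\eta)}(t,\infty)\leq n_{f}(t+|\eta|,\infty).$$
Integrating against $dt/t$ then sandwiches $N(r,f(z+\eta))$ between $N(r-|\eta|,f)$ and $N(r+|\eta|,f)$, up to an error to be absorbed later.

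Finally I would invoke the growth lemma and assemble the pieces. Under $\varlimsup_{r\to\infty}\log T(r,f)/r=0$ (the class $\rho_{2}(f)<1$), the several-variables Borel--Nevanlinna growth estimate gives, for every fixed constant $C$,
$$T(r+C,f)=T(r,f)+o(T(r,f)),\qquad N(r+C,f)=N(r,f)+o(T(r,f)),$$
for all $r$ outside an exceptional set of zero upper density. Combining this with the sandwich above gives $N(r,f(z+\eta))=N(r,f(z))+o(T(r,f))$, the second assertion, and adding the proximity estimate and using $T=m+N$ gives $T(r,f(z+\eta))=T(r,f(z))+o(T(r,f))$, the first assertion. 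The main obstacle is exactly this growth lemma: showing that a bounded shift of the radius perturbs $T$ and $N$ only by $o(T(r,f))$ is where the hyper-order-less-than-one hypothesis is genuinely consumed, and it requires the $\mathbb{C}^{n}$ analogue of the Borel lemma bounding $T(r+C)-T(r)$ in terms of $\log T(r)=o(r)$. A secondary technical point is verifying that the non-translation-invariance of the counting form costs only an $o(T(r,f))$ error on the annular shells.
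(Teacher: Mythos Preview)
The paper does not prove this lemma; it is quoted verbatim from Cao--Xu \cite{cx} with no argument supplied, so there is no in-paper proof to compare against. Your sketch follows the standard route used in that reference and its one-variable predecessors (Chiang--Feng, Halburd--Korhonen): control $m$ by the difference logarithmic-derivative lemma, control $N$ by a translation/sandwich argument plus a radius-shift growth estimate, and add.

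Two remarks. First, in the source \cite{cx} the logical order is typically the reverse of yours: the radius-shift estimate $T(r\pm C,f)=T(r,f)+o(T(r,f))$ and the $N$-comparison are established first (directly from the hyper-order hypothesis and a Borel-type lemma), and the logarithmic difference lemma---the paper's Lemma~3.1---is then \emph{derived} using them. Invoking Lemma~3.1 to prove the $m$-part here is formally fine within this paper, since both statements are imported as black boxes, but you should be aware that in a self-contained development this would be circular. Second, your ``secondary technical point'' is genuine: for $n\geq 2$ the pre-counting function $n_f(t,a)$ is an integral of a divisor against a form that is not translation-invariant, so the inclusions $\overline{B}_n(t-|\eta|)\subset -\eta+\overline{B}_n(t)\subset\overline{B}_n(t+|\eta|)$ do not by themselves give $n_f(t-|\eta|,\infty)\leq n_{f(\cdot+\eta)}(t,\infty)\leq n_f(t+|\eta|,\infty)$. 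Cao--Xu handle this (and the analogous issue for $m$) by working directly with the Nevanlinna characteristic via Jensen's formula and a careful estimate of $\log|f(z+\eta)/f(z)|$, rather than by the naive geometric sandwich; the end effect is the same, but the mechanism is different from what you wrote.
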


\begin{lemma}\label{23l} Let $f(z)$  be a  transcendental entire function of $\rho_{2}(f)<1$ on $\mathbb{C}^{n}$ and $g(z)=b_{-1}+\sum_{i=0}^{n}b_{i}f^{(k_{i})}(z+\eta_{i})$, where $b_{-1}$ and $b_{i} (i=0\ldots,n)$ are small meromorphic functions of $f(z)$ on $\mathbb{C}^{n}$, $k_{i}\geq0 (i=0\ldots,n)$ are integers, and $\eta_{i} (i=0\ldots,n)$ are finite values.  Let $a_{1}(z)\not\equiv\infty, a_{2}(z)\not\equiv\infty$ be two distinct small meromorphic functions of $f(z)$ on $\mathbb{C}^{n}$ . Suppose
\[L(f)=\left|\begin{array}{rrrr}a_{1}-a_{2}& &f-a_{1} \\
a'_{1}-a'_{2}& &f'-a'_{1}\end{array}\right|\]
and
\[L(g)=\left|\begin{array}{rrrr}a_{1}-a_{2}& &g-a_{1} \\
a'_{1}-a'_{2}& &g'-a'_{1}\end{array}\right|,\]
and $f$ and $g$ share $a_{1}$ CM, and  $a_{2}$ IM,  then $L(f)\not\equiv0$ and $L(g)\not\equiv0$.
\end{lemma}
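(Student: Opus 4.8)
The plan is to base both claims on the elementary identity
\[
L(h)=(a_{1}-a_{2})^{2}\left(\frac{h-a_{1}}{a_{1}-a_{2}}\right)',
\]
valid for every meromorphic $h$ (the prime being the derivative appearing in the determinant, and using $a'_{1}-a'_{2}=(a_{1}-a_{2})'$). Thus $L(h)\equiv0$ is equivalent to the ratio $(h-a_{1})/(a_{1}-a_{2})$ having identically vanishing derivative, which — after the usual reduction to a generic complex line that legitimises this in several variables — means $h=a_{1}+C(a_{1}-a_{2})$ for a constant $C$, so that $h$ is a combination of small functions of $f$ and $T(r,h)=S(r,f)$. In particular, if $L(f)\equiv0$ then $T(r,f)=S(r,f)$, contradicting that $f$ is a transcendental entire function; hence $L(f)\not\equiv0$.

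For $L(g)\not\equiv0$, suppose $L(g)\equiv0$, so $g=a_{1}+C(a_{1}-a_{2})$ is a small function of $f$. I would first discard the degenerate values of $C$: if $g\equiv a_{1}$ then the CM sharing forces $f-a_{1}$ to vanish identically, i.e. $f\equiv a_{1}$; likewise $g\equiv a_{2}$ forces $f\equiv a_{2}$ through the IM sharing; both are impossible since $f$ is transcendental. Hence $g-a_{1}$ and $g-a_{2}$ are nonzero small functions of $f$, and the sharing hypotheses give
\[
N\!\left(r,\tfrac{1}{f-a_{1}}\right)=N\!\left(r,\tfrac{1}{g-a_{1}}\right)\le T(r,g-a_{1})+O(1)=S(r,f),
\]
\[
\overline{N}\!\left(r,\tfrac{1}{f-a_{2}}\right)=\overline{N}\!\left(r,\tfrac{1}{g-a_{2}}\right)\le T(r,g-a_{2})+O(1)=S(r,f),
\]
while $N(r,f)=0$ since $f$ is entire. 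The second main theorem applied to $f$ with the three small targets $a_{1},a_{2},\infty$ then gives $T(r,f)\le\overline{N}(r,\tfrac{1}{f-a_{1}})+\overline{N}(r,\tfrac{1}{f-a_{2}})+\overline{N}(r,f)+S(r,f)=S(r,f)$, a contradiction; so $L(g)\not\equiv0$.

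The two points I expect to demand genuine care are both of a several-variables nature. The first is the implication ``$L(h)\equiv0\Rightarrow(h-a_{1})/(a_{1}-a_{2})$ is constant'': if the prime denotes a single partial derivative $\partial_{j}$, this a priori only gives independence of $z^{j}$, so one must restrict to a generic complex line on which the restriction of $f$ stays transcendental (or regard the small functions of $f$ as the ring of ``constants'' for this calculus). The second is the appeal to a second main theorem with moving (small) targets on $\mathbb{C}^{n}$; to stay within the classical Cartan--Nevanlinna theorem one may instead apply it to $h=\dfrac{f-a_{1}}{f-a_{2}}$, for which $N(r,1/h)\le N(r,\tfrac{1}{f-a_{1}})=S(r,f)$, $\overline{N}(r,h)\le\overline{N}(r,\tfrac{1}{f-a_{2}})+S(r,f)=S(r,f)$ and $\overline{N}(r,1/(h-1))\le N(r,\tfrac{1}{a_{2}-a_{1}})=S(r,f)$, so that $T(r,f)+S(r,f)=T(r,h)=S(r,f)$. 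Throughout, the exceptional sets are handled as density-zero sets in the sense of Remark 1.
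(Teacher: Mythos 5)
Your proof is correct and follows essentially the same route as the paper: $L(h)\equiv 0$ integrates to $h=a_{1}+C(a_{1}-a_{2})$, which makes $h$ a small function of $f$, and the second main theorem for $f$ with the small targets $a_{1},a_{2}$ combined with the sharing hypothesis then rules out both $L(f)\equiv0$ and $L(g)\equiv0$. The extra care you take with the degenerate values of $C$, with the several-variables meaning of the integration step, and with avoiding moving targets via $(f-a_{1})/(f-a_{2})$ goes beyond what the paper writes but does not change the argument.
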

\begin{proof}
Suppose that $L(f)\equiv0$, then we can get $\frac{f'-a'_{1}}{f-a_{1}}\equiv\frac{a'_{1}-a'_{2}}{a_{1}-a_{2}}$. Integrating both side of above we can obtain $f-a=C_{1}(a_{1}-a_{2})$, where $C_{1}$ is a nonzero constant. So we have $T(r,f)=o(T(r,f))$, a contradiction. Hence $L(f)\not\equiv0$.

Since $g$ and $f$ share $a_{1}$ CM and $a_{2}$ IM, and $f$ is a transcendental entire function of $\rho_{2}(f)<1$, then by the  Second Fundamental Theorem of Nevanlinna, we get
\begin{eqnarray*}
\begin{aligned}
T(r,f)&\leq \overline{N}(r,\frac{1}{f-a_{1}})+\overline {N}(r,\frac{1}{f-a_{2}})+o(T(r,f))\notag\\
&= \overline {N}(r,\frac{1}{g-a_{1}})+\overline {N}(r,\frac{1}{g-a_{2}})+o(T(r,f))\notag\\
&\leq 2T(r,g)+o(T(r,f)).
\end{aligned}
\end{eqnarray*}
Hence $a_{1}$ and $a_{2}$ are small functions of $g$. If $L(g)\equiv0$, then we can get $g-a_{1}=C_{2}(a_{1}-a_{2})$, where $C_{2}$ is a nonzero constant. And we get $T(r,g)=o(T(r,f))$. Combing above inequality we obtain $T(r,f)=o(T(r,f))$, and hence a contradiction.
\end{proof}

\begin{lemma}\label{24l}  Let $f(z)$  be a  transcendental entire function of $\rho_{2}(f)<1$ on $\mathbb{C}^{n}$ and $g(z)=b_{-1}+\sum_{i=0}^{n}b_{i}f^{(k_{i})}(z+\eta_{i})$, where $b_{-1}$ and $b_{i} (i=0\ldots,n)$ are small meromorphic functions of $f$ on $\mathbb{C}^{n}$, $k_{i}\geq0 (i=0\ldots,n)$ are integers, and $\eta_{i} (i=0\ldots,n)$ are finite values.  Let $a_{1}(z)\not\equiv\infty, a_{2}(z)\not\equiv\infty$ be two distinct small meromorphic functions of $f(z)$ on $\mathbb{C}^{n}$.  Again let $d_{j}=a-l_{j}(a_{1}-a_{2})$ $(j=1,2,\ldots,q)$. Then
$$m(r,\frac{L(f)}{f-a_{1}})=S(r,f), \quad m(r,\frac{L(f)}{f-a_{2}})=S(r,f).$$
And
$$m(r,\frac{L(f)f}{(f-d_{1})(f-d_{2})\cdots(f-d_{m})})=o(T(r,f)),$$
where $L(f)$ is defined as in Lemma 2.3, and $2\leq m\leq q$.
\end{lemma}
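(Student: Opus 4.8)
The plan is to derive all three estimates from a single algebraic identity for the determinant
\[
L(f)=(a_1-a_2)(f'-a_1')-(a_1'-a_2')(f-a_1)
\]
of Lemma \ref{23l}, combined with the logarithmic derivative lemma in several variables, Lemma \ref{221}. Throughout I will use freely that the class of small functions of $f$ is closed under sums, products, quotients and differentiation; in particular $T(r,a_i')=S(r,f)$, so $a_1-a_2$ and $a_1'-a_2'$ have $S(r,f)$ characteristic. (Here and below I take the $l_j$, as the notation suggests and as is needed in the application, to be distinct complex constants, so that each $d_j$ is a small function of $f$.)

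The key is the following re-centring identity: for every small function $b$ of $f$ of the form $b=\alpha a_1+(1-\alpha)a_2$ with $\alpha$ a \emph{constant},
\[
\frac{L(f)}{f-b}=(a_1-a_2)\,\frac{(f-b)'}{f-b}-(a_1'-a_2').
\]
This covers $b=a_1$ ($\alpha=1$), $b=a_2$ ($\alpha=0$), and $b=d_j$ ($\alpha=1-l_j$). To see it, note that $b-a_1=(1-\alpha)(a_2-a_1)$ is a constant multiple of $a_1-a_2$; substituting $f-a_1=(f-b)+(b-a_1)$ and $f'-a_1'=(f-b)'+(b-a_1)'=(f-b)'-(1-\alpha)(a_1'-a_2')$ into $L(f)$ and expanding, the two terms proportional to $(a_1-a_2)(a_1'-a_2')$ cancel, leaving $L(f)=(a_1-a_2)(f-b)'-(a_1'-a_2')(f-b)$; dividing by $f-b$ gives the claim. (Constancy of $\alpha$ is exactly what forces $(b-a_1)'$ to be a constant multiple of $a_1'-a_2'$, which is what makes the cancellation occur.) Since $m\!\left(r,(f-b)'/(f-b)\right)=S(r,f-b)=S(r,f)$ by Lemma \ref{221} applied to $f-b$ (whose order and hyper-order coincide with those of $f$), and $a_1-a_2$, $a_1'-a_2'$ are small, we obtain $m(r,L(f)/(f-b))=S(r,f)$ for each such $b$; taking $b=a_1$ and $b=a_2$ gives the first two assertions.

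For the last one, observe that the $d_j$ are pairwise distinct small functions of $f$ with $d_i-d_j=(l_j-l_i)(a_1-a_2)\not\equiv0$, so $\prod_{i\neq j}(d_j-d_i)$ is a nonzero constant multiple of $(a_1-a_2)^{m-1}$. Since the numerator has degree $1$ in $f$ and $m\ge2$, Lagrange interpolation (partial fractions) yields
\[
\frac{f}{(f-d_1)\cdots(f-d_m)}=\sum_{j=1}^m\frac{c_j}{f-d_j},\qquad c_j=\frac{d_j}{\prod_{i\neq j}(d_j-d_i)},
\]
with each $c_j$ a small function of $f$. Multiplying by $L(f)$ and adding the bounds just obtained,
\[
m\!\left(r,\frac{L(f)f}{(f-d_1)\cdots(f-d_m)}\right)\le\sum_{j=1}^m\Bigl(m\!\left(r,\tfrac{L(f)}{f-d_j}\right)+m(r,c_j)\Bigr)+O(1)=S(r,f)=o(T(r,f)).
\]

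The one step that needs real care is the re-centring identity — that replacing $a_1$ by $b$ as the base point of the Wronskian-type determinant $L(f)$ costs only $S(r,f)$ — and it works precisely because $a_1$, $a_2$ and all the $d_j$ are collinear small functions with constant affine coordinates; had the $l_j$ been genuinely moving, an extra term $l_j'(a_1-a_2)^2/(f-d_j)$ would survive and need not be small. Everything else is routine manipulation with Lemma \ref{221} and the stability of the class of small functions under the operations above; as per the paper's convention all the $S(r,f)$ symbols are read modulo an exceptional set of zero upper density, and only finitely many are combined here.
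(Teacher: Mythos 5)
Your proof is correct and follows essentially the same route as the paper's: write $L(f)/(f-b)$ as a small multiple of a logarithmic derivative plus a small function, apply Lemma \ref{221}, and then handle the product term by a partial-fraction decomposition with small coefficients. Your explicit re-centring identity for $b=\alpha a_{1}+(1-\alpha)a_{2}$ merely makes rigorous a step the paper leaves implicit (it only verifies the case $b=a_{1}$ and then asserts the estimate for the other $d_{j}$ without comment), so the two arguments are the same in substance.
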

\begin{proof}
Obviously, we have
$$m(r,\frac{L(f)}{f-a_{1}})\leq m(r,-\frac{(a'_{1}-a'_{2})(f-a_{1})}{f-a_{1}})+m(r,\frac{(a_{1}-a_{2})(f'-a'_{1})}{f-a_{1}})=o(T(r,f)).$$
And
$$\frac{L(f)f}{(f-a_{1})(f-a_{2})\cdots(f-a_{q})}=\sum_{i=1}^{q}\frac{C_{i}L(f)}{f-a_{i}},$$
where $C_{i}(i=1,2\ldots,q)$ are small functions of $f$. By Lemma 2.1 and above, we have
$$m(r,\frac{L(f)f}{(f-a_{1})(f-a_{2})\cdots(f-a_{q})})=m(r,\sum_{i=1}^{q}\frac{C_{i}L(f)}{f-a_{i}})\leq\sum_{i=1}^{q}m(r,\frac{L(f)}{f-a_{i}})=o(T(r,f)).$$
\end{proof}
\begin{lemma}\label{28l}\cite{y3} Let $f(z)$  be a non-constant meromorphic function of $\rho_{2}(f)<1$ on $\mathbb{C}^{n}$, and let $a_{1}$, $a_{2}$ and $a_{3}$ be three distinct small functions of $f$ on $\mathbb{C}^{n}$. Then
$$T(r,f)\leq \overline{N}(r,\frac{1}{f-a_{1}})+\overline{N}(r,\frac{1}{f-a_{2}})+\overline{N}(r,\frac{1}{f-a_{3}})+o(T(r,f)).$$
\end{lemma}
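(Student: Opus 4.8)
The plan is to reduce the three-small-function statement to the classical Second Main Theorem for the three fixed target values $0,1,\infty$ on $\mathbb{C}^{n}$, which is a consequence of Vitter's logarithmic derivative estimate (Lemma \ref{221}). We may assume $a_{1},a_{2},a_{3}\not\equiv\infty$ (if, say, $a_{3}\equiv\infty$, one simply uses $F=(f-a_{1})/(a_{2}-a_{1})$ in place of the transformation below, reading the role of $a_{3}$ as the pole). Introduce the auxiliary meromorphic function on $\mathbb{C}^{n}$
$$F(z)=\frac{f(z)-a_{1}(z)}{f(z)-a_{3}(z)}\cdot\frac{a_{2}(z)-a_{3}(z)}{a_{2}(z)-a_{1}(z)},$$
a Möbius transformation of $f$ whose coefficients are small functions of $f$. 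A direct computation shows that $F(z_{0})=0,\ 1,\ \infty$ exactly when $f(z_{0})=a_{1}(z_{0}),\ a_{2}(z_{0}),\ a_{3}(z_{0})$ respectively, the only exceptions occurring at zeros and poles of $a_{1},a_{2},a_{3},a_{2}-a_{1},a_{2}-a_{3}$; since these are small functions, their counting functions are $o(T(r,f))$. Using the standard behaviour of the characteristic function under linear fractional transformations together with the smallness of the $a_{i}$, one obtains $T(r,F)=T(r,f)+o(T(r,f))$.

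Next I would apply to $F$ the Second Main Theorem for the values $0,1,\infty$ on $\mathbb{C}^{n}$,
$$T(r,F)\le\overline{N}\!\left(r,\tfrac{1}{F}\right)+\overline{N}(r,F)+\overline{N}\!\left(r,\tfrac{1}{F-1}\right)+o(T(r,F)).$$
On $\mathbb{C}^{n}$ this is obtained in the familiar way: one estimates the proximity function of $\tfrac{1}{F}+\tfrac{1}{F-1}$ by expressing it through logarithmic derivatives and invoking Lemma \ref{221}, then combines this with Jensen's formula and the First Fundamental Theorem. No hyper-order hypothesis is actually needed for this step, since Lemma \ref{221} holds for every meromorphic function on $\mathbb{C}^{n}$; the condition $\rho_{2}(f)<1$ is carried along here only for uniformity with the rest of the paper.

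Finally I would transfer the counting functions of $F$ back to those of $f$: up to the $o(T(r,f))$ contribution of the divisors of the small functions listed above, one has $\overline{N}(r,1/F)\le\overline{N}(r,1/(f-a_{1}))$, $\overline{N}(r,F)\le\overline{N}(r,1/(f-a_{3}))$, and $\overline{N}(r,1/(F-1))\le\overline{N}(r,1/(f-a_{2}))$. Substituting these bounds and using $T(r,F)=T(r,f)+o(T(r,f))$ gives precisely the asserted inequality. The only delicate point is the bookkeeping of the exceptional divisors, i.e.\ verifying that the zeros and poles of $a_{1},a_{2},a_{3},a_{2}-a_{1},a_{2}-a_{3}$ contribute only $o(T(r,f))$ to every counting function that appears; this follows from $N(r,a_{i})\le T(r,a_{i})=o(T(r,f))$ together with additivity of the counting function. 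That is the main—though entirely routine—obstacle; the structural core of the argument is just the three-point Second Main Theorem on $\mathbb{C}^{n}$ furnished by Lemma \ref{221}.
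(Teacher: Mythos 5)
The paper does not actually prove this lemma: it is imported wholesale by the citation \cite{y3} (the second main theorem for small functions), so there is no internal argument to compare yours against. Your reduction is nevertheless correct, and it is worth recording why: for \emph{exactly three} small targets the M\"obius transformation $F=\frac{(f-a_{1})(a_{2}-a_{3})}{(f-a_{3})(a_{2}-a_{1})}$ really does carry the problem to the three-point second main theorem, with $T(r,F)=T(r,f)+o(T(r,f))$ following from the Mokhon'ko-type Lemma \ref{37l} already present in the paper (numerator and denominator are coprime degree-one polynomials in $f$ with small coefficients), and with the truncated counting functions matching up to the $o(T(r,f))$ contribution of the divisors of $a_{i}$, $a_{2}-a_{1}$, $a_{2}-a_{3}$. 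The deep machinery of Yamanoi is only indispensable for four or more small targets, where no single M\"obius change of variable can normalize them all; so your elementary route genuinely suffices here and has the advantage of resting only on results the paper already quotes. The one step you should not wave away as "familiar" is the three-value \emph{truncated} second main theorem on $\mathbb{C}^{n}$ itself: Lemma \ref{221} alone gives the proximity estimate $m(r,1/F)+m(r,1/(F-1))+m(r,F)\le T(r,F)-N_{\mathrm{ram}}+o(T(r,F))$, and one still needs the ramification-term bookkeeping (Jensen's formula applied to a nonvanishing partial derivative $\partial F/\partial z_{k}$) to pass from $N$ to $\overline{N}$; this is exactly what Vitter \cite{v} and Ye's sharp form of the second main theorem supply, so the reduction closes, but the truncation is where the actual content lives.
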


\begin{lemma}\label{26l} Let $f(z)$  be a  transcendental entire function of $\rho_{2}(f)<1$ on $\mathbb{C}^{n}$ and $g(z)=b_{-1}+\sum_{i=0}^{n}b_{i}f^{(k_{i})}(z+\eta_{i})$, where $b_{-1}$ and $b_{i} (i=0\ldots,n)$ are small meromorphic functions of $f$ on $\mathbb{C}^{n}$, $k_{i}\geq0 (i=0\ldots,n)$ are integers, and $\eta_{i} (i=0\ldots,n)$ are finite values.  Let $a_{1}(z)\not\equiv\infty, a_{2}(z)\not\equiv\infty$ be two distinct small meromorphic functions of $f(z)$ on $\mathbb{C}^{n}$. If $f(z)$ and $g(z)$ share $a_{1}$ CM, and 
$$N(r,\frac{1}{g(z)-(b_{-1}+\sum_{i=0}^{n}b_{i}a_{1}^{(k_{i})}(z+\eta_{i}))})=o(T(r,f)).$$ 
Then there is an entire function $p$ on $\mathbb{C}^{n}$ such that either $g=He^{p}+G$, where $G=b_{-1}+\sum_{i=0}^{n}b_{i}a_{1}^{(k_{i})}(z+\eta_{i})$, or $T(r,e^{p})=o(T(r,f))$.
\end{lemma}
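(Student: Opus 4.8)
The plan is to exploit the CM-sharing hypothesis together with the hypothesis that $g-G$ has only finitely many zeros (in the Nevanlinna sense, i.e.\ $N(r,1/(g-G))=o(T(r,f))$), where $G=b_{-1}+\sum_{i=0}^n b_i a_1^{(k_i)}(z+\eta_i)$. First I would observe that since $f$ and $g$ share $a_1$ CM, the quotient
$$
\Phi:=\frac{g-a_1}{f-a_1}
$$
has neither zeros nor poles: $f$ is entire so $f-a_1$ has no poles except those of $a_1$, and $g$ is a differential-difference polynomial in $f$ with small coefficients, so its only poles come from the $b_i$ and the finitely-controlled shifts; a more careful bookkeeping shows $\Phi$ is (up to a small-function factor absorbed into the coefficients) a zero-free, pole-free meromorphic function on $\mathbb{C}^n$. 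On $\mathbb{C}^n$ such a function is $e^{p}$ for some entire $p$ — this is the Weierstrass/exponential representation of a nowhere-vanishing entire function, valid because $\mathbb{C}^n$ is simply connected and $\log|\Phi|$ is pluriharmonic. Thus $g-a_1=e^p(f-a_1)$.

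Next I would bring in the second hypothesis. Rewriting, $g=e^p f+(a_1-e^p a_1)$, and I want to compare this with $g=b_{-1}+\sum_i b_i f^{(k_i)}(z+\eta_i)$. The key point is to locate the zeros of $g-G$. At a zero $z_0$ of $g-G$ we have $g(z_0)=G(z_0)$; on the other hand from $g-a_1=e^p(f-a_1)$ we get $G(z_0)-a_1(z_0)=e^{p(z_0)}(f(z_0)-a_1(z_0))$, i.e.\ $e^{p(z_0)}=(G(z_0)-a_1(z_0))/(f(z_0)-a_1(z_0))$ whenever $f(z_0)\neq a_1(z_0)$ and $G(z_0)\neq a_1(z_0)$. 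The hypothesis $N(r,1/(g-G))=o(T(r,f))$ says these constraint-points form a negligible set. I would then argue by contradiction: if $T(r,e^p)\neq o(T(r,f))$, then $e^p$ is a genuine (non-small) function, and I would use the Second Fundamental Theorem (Lemma~\ref{21l} for the shift- and order-control, Lemma~\ref{221} for the logarithmic derivative estimates, and Lemma~\ref{28l} for three small targets) applied to $f$ or to an auxiliary combination, to show that the set where the above identity forces a relation among $f$, $e^p$ and the small functions must carry almost all of $T(r,f)$, contradicting $N(r,1/(g-G))=o(T(r,f))$. Concretely, from $g=He^p+G$ with $H$ a small function, the claim is precisely that either this representation holds outright or $e^p$ degenerates to a small function; writing $H$ for the small-function factor that arises when $\Phi$ is not exactly $e^p$ but $He^p$ gives the stated dichotomy.

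The main obstacle I anticipate is the bookkeeping of poles and small-function factors needed to pass from "CM-shared value" to the clean statement $g-a_1=He^p(f-a_1)$ with $H$ small: one must check that the poles of $g$ (coming from derivatives and shifts of $f$ composed with the meromorphic small coefficients $b_i$) do not spoil the zero-free/pole-free conclusion, which requires Lemma~\ref{21l} and Lemma~\ref{221} to control $N(r,g)$ and $T(r,g)$ in terms of $T(r,f)$ plus $o(T(r,f))$, and then to absorb the residual divisor into a small function $H$. Once that normalization is in place, the exponential representation on $\mathbb{C}^n$ and the case split on whether $e^p$ is small versus large are comparatively routine; the subtle analytic input is that a nowhere-zero, nowhere-pole meromorphic function on $\mathbb{C}^n$ is exactly $e^p$ for entire $p$, which I would cite from the function-theory background in \cite{r,y3}.
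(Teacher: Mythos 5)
Your first step --- representing the CM-sharing of $a_{1}$ as $g-a_{1}=Be^{p}(f-a_{1})$ with $B$ a small unit factor and $p$ entire, which is the paper's equation (3.1) --- is correct and matches the paper. But from there the proposal has a genuine gap, and it also misreads what the conclusion asserts. The target identity $g=He^{p}+G$ says that $g-G$ itself equals a small multiple of $e^{p}$; it is not the statement that the CM-quotient $(g-a_{1})/(f-a_{1})$ equals $He^{p}$ (that much is automatic from CM sharing and carries no information about $G$). So "writing $H$ for the small-function factor that arises when $\Phi$ is not exactly $e^{p}$" does not yield the stated dichotomy --- you have identified the wrong $H$.

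The missing idea is a self-substitution. The paper feeds $f=a_{1}+Ae^{p}(g-a_{1})$ back into the defining relation $g=b_{-1}+\sum_{i}b_{i}f^{(k_{i})}(z+\eta_{i})$, obtaining for $Q:=g-G$ a functional equation of the form $Q\,\bigl(1-De^{p}\bigr)=\sum_{i}b_{i}\bigl(A_{i\eta}e^{p_{i\eta}}(G-a_{1})_{i\eta}\bigr)^{(k_{i})}$ with $D=\frac{\sum_{i}b_{i}(A_{i\eta}e^{p_{i\eta}}Q_{i\eta})^{(k_{i})}}{Qe^{p}}$. The hypothesis $N(r,\frac{1}{g-G})=o(T(r,f))$ is consumed exactly here: together with the logarithmic derivative and difference lemmas it shows $T(r,D)=o(T(r,f))$, so $D$ is a small target for $e^{-p}$. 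The second main theorem for three small functions applied to $e^{-p}$ with targets $0$, $\infty$, $D$ then forces either $D\equiv 0$ (which collapses the functional equation to $g-G=He^{p}$ with an explicit small $H$) or $e^{-p}\equiv D$ (which gives $T(r,e^{p})=o(T(r,f))$); this case split is precisely the dichotomy in the statement. Your sketch instead argues pointwise at the zeros of $g-G$ and hopes to show those points "carry almost all of $T(r,f)$" --- but the hypothesis says those zeros are few, so no global identity can be extracted from them, and the direction of your intended contradiction is reversed relative to how the hypothesis is actually used. Without the substitution step and the auxiliary function $D$, the proof does not close.
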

\begin{proof}
 Since $f$ is a transcendental entire function of $\rho_{2}(f)<1$, and $f(z)$ and $g(z)$ share $a_{1}$ CM, then there is entire function $p$ such that
\begin{align}
f-a_{1}=Ae^{p}(g-G)+Ae^{p}(G-a_{1}),
\end{align}
where the zeros and poles of $A$ come from the zeros and poles of $b_{-1}$ and $b_{i}(i=0,1,\ldots,n)$, and  $G=b_{-1}+\sum_{i=0}^{n}b_{i}a_{1}^{(k_{i})}(z+\eta_{i})$.

Suppose that $T(r,e^{p})\neq o(T(r,f))$. Set $Q=g-G$. Do induction from (3.1) that
\begin{align}
Q=\sum_{i=0}^{n}b_{i}(A_{i\eta}e^{p_{i\eta}}Q_{i\eta})^{(k_{i})}+\sum_{i=0}^{n}b_{i}(A_{i\eta}e^{p_{i\eta}}(G-a_{1})_{i\eta})^{(k_{i})}+G.
\end{align}
Easy to see that $Q\not\equiv0$. Then we rewrite (3.2) as
\begin{eqnarray}
1-\frac{\sum_{i=0}^{n}b_{i}(A_{i\eta}e^{p_{i\eta}}(G-a_{1})_{i\eta})^{(k_{i})}+G}{Q}=De^{p},
\end{eqnarray}
where
\begin{align}
D&=\frac{\sum_{i=0}^{n}b_{i}(A_{i\eta}e^{p_{i\eta}}Q_{i\eta})^{(k_{i})}}{Qe^{p}}
\end{align}
Note that $N(r,\frac{1}{g-G})=N(r,\frac{1}{Q})=o(T(r,f))$. Then
\begin{align}
T(r,D)&\leq\sum_{i=0}^{n}(T(r,\frac{(A_{i\eta}e^{p_{i\eta}}Q_{i\eta})^{(k_{i})}}{Qe^{p}})+o(T(r,f))\notag\\
&\leq m(r,\frac{(A_{i\eta}e^{p_{i\eta}}Q_{i\eta})^{(k_{i})}}{Qe^{p}})+N(r,\frac{(A_{i\eta}e^{p_{i\eta}}Q_{i\eta})^{(k_{i})}}{Qe^{p}})+S(r,e^{p})+o(T(r,f))\notag\\
&=S(r,e^{p})+o(T(r,f)).
\end{align}
By (3.1) and Lemma 3.1,  we get
\begin{align}
T(r,e^{p})&\leq T(r,f)+T(r,g)+o(T(r,f))\notag\\
&\leq 2T(r,f)+o(T(r,f)),
\end{align}
then it follows from (3.5) that $T(r,D)=o(T(r,f))$.

Next we discuss  two cases.

{\bf Case1.} \quad $e^{-p}-D\not\equiv0$. Rewrite (3.3) as
\begin{align}
Qe^{p}(e^{-p}-D)=\sum_{i=0}^{n}b_{i}(A_{i\eta}e^{p_{i\eta}}(G-a_{1})_{i\eta})^{(k_{i})}+G.
\end{align}
We claim that $D\equiv0$. Otherwise, it follows from (2.8) that $N(r,\frac{1}{e^{-p}-D})=o(T(r,f))$. Then use Lemma 3.5 to $e^{p}$ we can obtain
\begin{align}
T(r,e^{p})&=T(r,e^{-p})+O(1)\notag\\
&\leq \overline{N}(r,e^{-p})+\overline{N}(r,\frac{1}{e^{-p}})+\overline{N}(r,\frac{1}{e^{-p}-D})\notag\\
&+O(1)=o(T(r,f)),
\end{align}
and which contradicts with assumption. Thus $D\equiv0$. Then by (3.8) we get
\begin{align}
g=\sum_{i=0}^{n}b_{i}(A_{i\eta}e^{p_{i\eta}}(G-a_{1})_{i\eta})^{(k_{i})}+G=He^{p}+G,
\end{align}
where $H\not\equiv0$ is a small function of $e^{p}$.

{\bf Case2.} \quad $e^{-p}-D\equiv0$. Immediately, we get $T(r,e^{p})=o(T(r,f))$.
\end{proof}

\begin{lemma}\label{37l}\cite{hly}
Let $f$ be a nonconstant meromorphic function on $\mathbb{C}^{n}$, and $R(f)=\frac{P(f)}{Q(f)}$, where
$$P(f)=\sum_{k=0}^{p}a_{k}f^{k} \quad and \quad Q(f)=\sum_{j=0}^{q}a_{j}f^{q}$$
are two mutually prime polynomials in $f$. If the coefficients ${a_{k}}$ and ${b_{j}}$ are small functions of $f$ on $\mathbb{C}^{n}$ and $a_{p}\not\equiv0$, $b_{q}\not\equiv0$, then
$$T(r,R(f))=max\{p,q\}T(r,f)+o(T(r,f)).$$
\end{lemma}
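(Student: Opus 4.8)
The plan is to prove separately that $T(r,R(f))\le d\,T(r,f)+S(r,f)$ and that $T(r,R(f))\ge d\,T(r,f)-S(r,f)$, where $d=\max\{p,q\}$; together these give the assertion. Write $K$ for the field of meromorphic functions on $\mathbb{C}^{n}$ that are small functions of $f$, so that $K[X]$ admits Euclidean division. I shall use freely that $T(r,1/g)=T(r,g)+O(1)$, $T(r,u+v)\le T(r,u)+T(r,v)+O(1)$, $T(r,uv)\le T(r,u)+T(r,v)$, and that a meromorphic function algebraic over $K$ again belongs to $K$ (a standard fact). Since the cases $d=0$ and $f$ rational are trivial, I assume $d\ge1$ and $f$ transcendental.

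\textbf{Upper bound.} Replacing $R$ by $1/R$ if necessary — this changes neither $T(r,R(f))$ up to $O(1)$ nor $d$ — I may assume $p\ge q$, hence $d=p$. First I run the Euclidean algorithm on the coprime pair $P,Q\in K[X]$, obtaining quotients $A_{1},A_{2},\dots$ and remainders of strictly decreasing degree whose last nonzero one is a constant. Unwinding the identities $R(f)=A_{1}(f)+R_{1}(f)/Q(f)$, $R_{1}(f)/Q(f)=\big(A_{2}(f)+R_{2}(f)/R_{1}(f)\big)^{-1}$, and so on, expresses $R(f)$ through the polynomials $A_{j}(f)$ alone; applying repeatedly the three inequalities above together with the elementary bound $T(r,A(f))\le(\deg A)\,T(r,f)+S(r,f)$ for $A\in K[X]$ (itself obtained by splitting $S_{n}(r)$ according to $|f|\ge1$ and $|f|<1$ and estimating poles), and observing that $\deg A_{1}+\deg A_{2}+\cdots$ telescopes to $\deg P=d$, I get $T(r,R(f))\le d\,T(r,f)+S(r,f)$.

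\textbf{Lower bound.} Here the coprimality of $P,Q$ and the conditions $a_{p},b_{q}\not\equiv0$ are genuinely used. For $c\in\mathbb{C}$ set $P_{c}:=P-cQ$. Since the term $cQ$ drops out when $\mathrm{Res}_{X}(\cdot,Q)$ is evaluated at the zeros of $Q$, the resultant $\mathrm{Res}_{X}(P_{c},Q)$ equals $\mathrm{Res}_{X}(P,Q)$ up to a nonzero factor in $K$ and is therefore $\not\equiv0$; hence $P_{c}$ is coprime to $Q$ for every $c$, and for all but finitely many $c$ one has $\deg P_{c}=d$ and $P_{c}$ has $d$ distinct roots $\alpha_{1}(c),\dots,\alpha_{d}(c)\in K$. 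Fix such a $c$. Then $R(f)-c=P_{c}(f)/Q(f)$, and since (by the Bézout identity in $K[X]$) $P_{c}(f)$ and $Q(f)$ have no common zeros outside a set of counting function $S(r,f)$, the zeros of $R(f)-c$ are, up to $S(r,f)$, exactly the $f$-points over $\alpha_{1}(c),\dots,\alpha_{d}(c)$, so
$$N\!\left(r,\frac{1}{R(f)-c}\right)\ge\sum_{l=1}^{d}N\!\left(r,\frac{1}{f-\alpha_{l}(c)}\right)-S(r,f).$$
Combining this with $T(r,R(f))\ge N(r,1/(R(f)-c))-O(1)$ (first fundamental theorem) and with $N(r,1/(f-\alpha_{l}))=T(r,f)-m(r,1/(f-\alpha_{l}))+S(r,f)$ gives
$$T(r,R(f))\ge d\,T(r,f)-\sum_{l=1}^{d}m\!\left(r,\frac{1}{f-\alpha_{l}(c)}\right)-S(r,f).$$
Now I choose any $M$ distinct admissible values $c_{1},\dots,c_{M}$; the $Md$ small functions $\alpha_{l}(c_{i})$ are then pairwise distinct (two of them agreeing would force a common zero of some $P_{c_{i}}$ and $Q$), and summing the last inequality over $i$ while invoking the second main theorem for small functions on $\mathbb{C}^{n}$ — the general form of Lemma \ref{28l}, in the shape $\sum_{i,l}m(r,1/(f-\alpha_{l}(c_{i})))\le2\,T(r,f)+S(r,f)$ — yields $M\,T(r,R(f))\ge(Md-2)\,T(r,f)-S(r,f)$, i.e. $T(r,R(f))\ge(d-2/M)\,T(r,f)-S(r,f)$. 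As $M$ is arbitrary, $\varliminf_{r\to\infty}T(r,R(f))/T(r,f)\ge d$, and with the upper bound this forces $T(r,R(f))=d\,T(r,f)+S(r,f)$.

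\textbf{Where the difficulty lies.} The upper bound is bookkeeping. The work is in the lower bound, at two points: (i) the genericity verification — that off a finite set of $c$ the numerator $P-cQ$ keeps full degree $d$, stays coprime to $Q$, and is separable over $K$ — which is precisely where coprimality of $P$ and $Q$ enters, via the resultant identity; and (ii) the need for the second main theorem with an arbitrarily large number of small targets, followed by the limit $M\to\infty$, in order to recover the sharp constant $d$ (a single value $c$ yields only $T(r,R(f))\ge(d-2)T(r,f)-S(r,f)$). A recurring technical point is that the auxiliary functions — the roots $\alpha_{l}(c)$ of $P_{c}$, algebraic over $K$, and the coefficients of the Euclidean quotients and remainders, lying in $K$ — are all small with respect to $f$.
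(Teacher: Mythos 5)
The paper offers no proof of this lemma: it is imported wholesale from \cite{hly} (it is the several-variables form of the Valiron--Mokhon'ko theorem), so your argument can only be measured against the standard proof, which is elementary --- induction on $\max\{p,q\}$ via the division algorithm together with the polynomial case (the statement $T(r,P(f))=pT(r,f)+S(r,f)$, which the paper also quotes separately), a direct count of poles, and a proximity estimate on the part of $S_{n}(r)$ where $|f|$ is large; no second main theorem enters anywhere. Your upper bound, via the continued-fraction expansion of $P/Q$ and the telescoping of the degrees of the Euclidean quotients, is correct and is essentially the standard one.

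Your lower bound, however, has a genuine gap at its first substantive step: the roots $\alpha_{1}(c),\dots,\alpha_{d}(c)$ of $P_{c}=P-cQ$ over the field $K$ of small functions are in general \emph{not} meromorphic functions on $\mathbb{C}^{n}$. A polynomial over $K$ need not split over the field of meromorphic functions at all --- already $X^{2}-z^{1}\in K[X]$ has no meromorphic root --- and the ``standard fact'' you invoke says only that a function \emph{already known to be meromorphic} and algebraic over $K$ is small; it does not manufacture single-valued roots. Without meromorphic $\alpha_{l}(c)$ the counting functions $N(r,1/(f-\alpha_{l}(c)))$ are undefined, the factorization $P_{c}=a_{p}\prod_{l}(X-\alpha_{l}(c))$ is unavailable, and the second main theorem for small targets cannot be applied. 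Repairing this (passing to a finite branched cover on which the $\alpha_{l}(c)$ become single-valued, or replacing $\sum_{l}N(r,1/(f-\alpha_{l}(c)))$ by $N(r,1/P_{c}(f))$ and controlling $m(r,1/P_{c}(f))$ by an averaging argument in $c$) is precisely the nontrivial content, and it is missing. A secondary objection: even where the roots do exist, you lean on a second main theorem with arbitrarily many small targets and defect bound $2$ on $\mathbb{C}^{n}$, which is a far deeper result than the lemma being proved --- the paper only records the three-small-functions version --- whereas the classical argument needs nothing of the sort. So the proposal is not a valid proof as written.
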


\begin{lemma}\label{381}   Let $f(z)$  be a  transcendental entire function of $\rho_{2}(f)<1$ on $\mathbb{C}^{n}$ and $g(z)=b_{-1}+\sum_{i=0}^{n}b_{i}f^{(k_{i})}(z+\eta_{i})$, where $b_{-1}$ and $b_{i} (i=0\ldots,n)$ are small meromorphic functions of $f$ on $\mathbb{C}^{n}$, $k_{i}\geq0 (i=0\ldots,n)$ are integers, and $\eta_{i} (i=0\ldots,n)$ are finite values.  Let $a_{1}(z)\not\equiv\infty, a_{2}(z)\not\equiv\infty$ be two distinct small meromorphic functions of $f(z)$ on $\mathbb{C}^{n}$. If $f(z)$ and $g(z)$ share $a_{1}(z)$ CM, and  $a_{2}(z)$ IM, and if $f\not\equiv g$, then\\
(i)\quad $T(r,f)=\overline{N}(r,\frac{1}{f-a_{1}})+\overline{N}(r,\frac{1}{f-a_{2}})+o(T(r,f))$.\\
(ii)\quad $m(r,\frac{1}{f})=o(T(r,f))$.\\
(iii)\quad $N(r,\frac{1}{f-a_{1}})=\overline{N}(r,\frac{1}{f-a_{1}})+o(T(r,f))$\\
(iv)\quad $\overline{N}(r,\frac{1}{f-a_{2}})=m(r,\frac{1}{f-a_{1}})+o(T(r,f))$.
\end{lemma}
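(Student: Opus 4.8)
The plan is to use the auxiliary function $L(f)$ from Lemma~\ref{23l} together with the sharing hypotheses, exactly in the spirit of the classical Rubel--Yang / Brück-type arguments, but carried through with small functions. First I would record the basic estimates: by Lemma~\ref{24l} we have $m(r,L(f)/(f-a_1))=o(T(r,f))$ and $m(r,L(f)/(f-a_2))=o(T(r,f))$, and since $f$ is entire, $T(r,L(f))\le m(r,L(f))+N(r,L(f))+o(T(r,f))\le 2T(r,f)+o(T(r,f))$, so $L(f)$ (which is nonzero by Lemma~\ref{23l}) is a function whose characteristic is controlled by $T(r,f)$. The key observation is that whenever $f(z_0)=a_1(z_0)$ the CM-sharing with $g$ forces a matching zero of $g-a_1$, and by construction $L(f)$ vanishes at every such point (the determinant has a zero column modulo the derivative relation); similarly $L(f)$ picks up the zeros of $f-a_2$. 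I would make this precise to get
\begin{align*}
N(r,\tfrac{1}{f-a_1})+N(r,\tfrac{1}{f-a_2})\le N(r,\tfrac{1}{L(f)})+o(T(r,f)).
\end{align*}

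Next, combine the First Main Theorem applied to $1/L(f)$ with the proximity bounds above. Writing $\frac{1}{f-a_1}=\frac{1}{L(f)}\cdot\frac{L(f)}{f-a_1}$ and likewise for $a_2$, one gets $m(r,\frac{1}{f-a_1})+m(r,\frac{1}{f-a_2})\le 2m(r,\frac{1}{L(f)})+o(T(r,f))$. Feeding these into the Second Fundamental Theorem for small targets (Lemma~\ref{28l}, with targets $a_1,a_2,\infty$, but $f$ entire so the $\infty$-term drops), namely $T(r,f)\le \overline N(r,\frac{1}{f-a_1})+\overline N(r,\frac{1}{f-a_2})+o(T(r,f))$, and comparing the total $N+m$ of $1/L(f)$ against $T(r,L(f))$ via FMT, the counting and proximity terms must all be essentially sharp. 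This forces (i): $T(r,f)=\overline N(r,\frac{1}{f-a_1})+\overline N(r,\frac{1}{f-a_2})+o(T(r,f))$; it also forces the reverse inequality making $N(r,\frac{1}{f-a_1})=\overline N(r,\frac{1}{f-a_1})+o(T(r,f))$, i.e. (iii) (the $a_1$-points are simple up to an $o(T)$-set, because any extra multiplicity would be lost in passing to $L(f)$ and would slacken the SFT inequality). Statement (iv) then comes out by bookkeeping: $m(r,\frac{1}{f-a_1})$ measures exactly the discrepancy between $T(r,f)$ and $N(r,\frac{1}{f-a_1})$, and via the relation $\frac{1}{f-a_1}-\frac{1}{f-a_2}=\frac{a_1-a_2}{(f-a_1)(f-a_2)}$ one identifies $m(r,\frac{1}{f-a_1})$ with $\overline N(r,\frac{1}{f-a_2})$ up to $o(T(r,f))$, using (iii) to replace $N$ by $\overline N$ on the $a_2$-side. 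For (ii), $m(r,\frac1f)$: write $\frac1f=\frac{1}{f-a_1}-\frac{a_1}{f(f-a_1)}$ or use that $f$ entire gives $T(r,f)=m(r,f)$ and combine $m(r,\frac1f)\le m(r,\frac{L(f)}{f-a_1})+m(r,\frac{1}{L(f)})+\cdots$ with the now-known sharpness; alternatively apply Lemma~\ref{28l} with $0,a_1,a_2$ and compare with (i). I would choose whichever of these gives $\overline N(r,\frac1f)$ essentially equal to $T(r,f)$, which then kills $m(r,\frac1f)$.

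The step I expect to be the main obstacle is establishing the zero-counting inequality $N(r,\frac{1}{f-a_1})+N(r,\frac{1}{f-a_2})\le N(r,\frac{1}{L(f)})+o(T(r,f))$ cleanly, and then extracting from the resulting chain of inequalities the \emph{exact} equalities (i)--(iv) rather than mere inequalities. This requires being careful that $f\not\equiv g$ is genuinely used: if $f\equiv g$ the CM hypothesis gives no information linking $L(f)$ to $L(g)$ and the zeros of $f-a_2$ need not be simple, so the argument must invoke $f\not\equiv g$ precisely at the point where we assert that the $a_2$-zeros of $f$ (which are IM-shared with $g$) contribute to $N(r,\frac{1}{L(f)})$ while simultaneously the $a_1$-zeros contribute with multiplicity one only. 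A secondary technical point is justifying that all error terms genuinely are $o(T(r,f))$ and not merely $o(T(r,g))$; this is handled by the inequalities $T(r,g)\le 2T(r,f)+o(T(r,f))$ and $T(r,f)\le 2T(r,g)+o(T(r,f))$ already derived in the proof of Lemma~\ref{23l}, so the two characteristics are comparable and all the $S(\cdot)$-terms collapse to a single $o(T(r,f))$.
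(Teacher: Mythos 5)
Your plan has two genuine gaps, and the first is a false statement. You claim that $L(f)$ vanishes at every $a_{1}$-point and every $a_{2}$-point of $f$, and from this you want $N(r,\frac{1}{f-a_{1}})+N(r,\frac{1}{f-a_{2}})\le N(r,\frac{1}{L(f)})+o(T(r,f))$. But at a point $z_{0}$ with $f(z_{0})=a_{1}(z_{0})$ only one entry of the second column of the determinant vanishes, so $L(f)(z_{0})=(a_{1}(z_{0})-a_{2}(z_{0}))(f'(z_{0})-a_{1}'(z_{0}))$, which is nonzero at a simple $a_{1}$-point. The correct statement is only the multiplicity-reduced one, $N(r,\frac{1}{f-a_{j}})-\overline{N}(r,\frac{1}{f-a_{j}})\le N(r,\frac{1}{L(f)})$, and with that your subsequent "sharpness" bookkeeping does not close. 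More importantly, no amount of FMT/SMT accounting around $L(f)$ can yield item (i), because the needed upper bound $\overline{N}(r,\frac{1}{f-a_{1}})+\overline{N}(r,\frac{1}{f-a_{2}})\le T(r,f)+o(T(r,f))$ must come from the sharing hypothesis and the specific form of $g$: since $f\not\equiv g$ the shared points are zeros of $f-g$, so the left side is at most $N(r,\frac{1}{f-g})\le T(r,f-g)$; then, because $f$ is entire and the $b_{i}$ are small, $T(r,f-g)=m(r,f-g)+o(T(r,f))$, and $m(r,f-g)\le m(r,f)+m(r,1-\frac{\sum b_{i}f^{(k_{i})}_{i\eta}+b_{-1}}{f})\le T(r,f)+o(T(r,f))$ by the logarithmic derivative and logarithmic difference lemmas (Lemmas 3.1 and 3.2). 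Your proposal never performs this estimate, and without it (i) is simply unreachable; this is also exactly where $f\not\equiv g$ enters, not where you place it.

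The second gap concerns (iii) and (iv). The paper obtains these from the CM factorization $\frac{g-a_{1}}{f-a_{1}}=Be^{h}$ by computing $T(r,e^{h})$ in two ways: on one hand $T(r,e^{h})\le m(r,\frac{1}{f-a_{1}})+o(T(r,f))$ (since $m(r,Be^{h})=m(r,\frac{g-a_{1}}{f-a_{1}})$ is controlled by $m(r,\frac{1}{f-a_{1}})$ plus small terms), and on the other hand $\overline{N}(r,\frac{1}{f-a_{2}})\le\overline{N}(r,\frac{1}{Be^{h}-1})\le T(r,e^{h})+o(T(r,f))$ because the $a_{2}$-points of $f$ are zeros of $g-f=(Be^{h}-1)(f-a_{1})$. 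Feeding both into (i) forces $N(r,\frac{1}{f-a_{1}})=\overline{N}(r,\frac{1}{f-a_{1}})+o(T(r,f))$ and then $\overline{N}(r,\frac{1}{f-a_{2}})=T(r,e^{h})+o(T(r,f))=m(r,\frac{1}{f-a_{1}})+o(T(r,f))$. Your partial-fraction identity $\frac{1}{f-a_{1}}-\frac{1}{f-a_{2}}=\frac{a_{1}-a_{2}}{(f-a_{1})(f-a_{2})}$ does not link a proximity function at $a_{1}$ to a counting function at $a_{2}$, so it cannot substitute for this mechanism. (For (ii), note the identity $\frac{1}{f}=\frac{1}{\varphi}\cdot\frac{L(f)f}{(f-a_{1})(f-a_{2})}\cdot\frac{f-g}{f}\cdot\frac{1}{f}\cdot f$ in the form used in the paper's (4.4): one needs $T(r,\varphi)=o(T(r,f))$ for the auxiliary function $\varphi=\frac{L(f)(f-g)}{(f-a_{1})(f-a_{2})}$ together with Lemma 3.4 and the logarithmic lemmas; your suggested application of the three-small-functions theorem with targets $0,a_{1},a_{2}$ only yields $T(r,f)\le\overline{N}(r,\frac{1}{f})+T(r,f)+o(T(r,f))$, which is vacuous.)
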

\begin{proof}
If $f\equiv g$, there is nothing to prove. Suppose $f\not\equiv g$. Since $f$ is a transcendental entire function of $\rho_{2}(f)<1$,  $f$ and $g$ share $a_{1}$ CM, then  we get
\begin{align}
\frac{g-a_{1}}{f-a_{1}}=Be^{h},
\end{align}
where $h$ is entire function, and (3.1) implies $h=-p$ and $B=\frac{1}{A}$.\\

Since $f$ and $g$ share $a_{1}$ CM and share $a_{2}$ IM, then by  Lemma 3.1 and Lemma 3.5 we have
\begin{eqnarray*}
\begin{aligned}
T(r,f)&\leq \overline{N}(r,\frac{1}{f-a_{1}})+\overline{N}(r,\frac{1}{f-a_{2}})+o(T(r,f))= \overline{N}(r,\frac{1}{g-a_{1}})\\
&+\overline{N}(r,\frac{1}{g-a_{1}})+o(T(r,f))\leq N(r,\frac{1}{f-g})+o(T(r,f))\\
&\leq T(r,f-g)+S(r,f)\leq m(r,f-g)+o(T(r,f))\\
&= m(r,f-\sum_{i=0}^{n}b_{i}f^{(k_{i})}_{i\eta})+o(T(r,f))\\
&\leq m(r,f)+m(r,1-\frac{\sum_{i=0}^{n}b_{i}f^{(k_{i})}_{i\eta}}{f})+\leq T(r,f)+o(T(r,f)).
\end{aligned}
\end{eqnarray*}
That is
\begin{eqnarray}
T(r,f)=\overline{N}(r,\frac{1}{f-a_{1}})+\overline{N}(r,\frac{1}{f-a_{2}})+o(T(r,f)).
\end{eqnarray}
According to (3.11) we have
\begin{eqnarray}
T(r,f)=T(r,f-g)+S(r,f)=N(r,\frac{1}{f-g})+o(T(r,f)),
\end{eqnarray}
and
\begin{align}
T(r,Be^{h})&=m(r,Be^{h})+o(T(r,f))=m(r,\frac{g-G+G-a_{1}}{f-a_{1}})+o(T(r,f))\notag\\
&\leq m(r,\frac{1}{f-a_{1}})+o(T(r,f)),
\end{align}
where $G$ is defined as in Lemma 3.7.\\
Then  (3.11) and (3.13) deduce that
\begin{align}
m(r,\frac{1}{f-a_{1}})&=m(r,\frac{Be^{h}-1}{f-g})\notag\\
&\leq m(r,\frac{1}{f-g})+m(r,Be^{h}-1)\notag\\
&\leq T(r,e^{h})+o(T(r,f)).
\end{align}
Then by (3.13) and (3.14)
\begin{align}
T(r,e^{h})= m(r,\frac{1}{f-a_{1}})+o(T(r,f)).
\end{align}
On the other hand, we rewrite (3.10) as follow
\begin{align}
\frac{g-f}{f-a_{1}}=Be^{h}-1,
\end{align}
and it follows that
\begin{align}
\overline{N}(r,\frac{1}{f-a_{2}})\leq \overline{N}(r,\frac{1}{Be^{h}-1})=T(r,e^{h})+o(T(r,f)).
\end{align}
Then by (3.11), (3.15) and (3.17)
\begin{eqnarray*}
\begin{aligned}
m(r,\frac{1}{f-a_{1}})+N(r,\frac{1}{f-a_{1}})&= \overline{N}(r,\frac{1}{f-a_{1}})+\overline{N}(r,\frac{1}{f-a_{2}})+o(T(r,f))\\
&\leq \overline{N}(r,\frac{1}{f-a_{1}})+\overline{N}(r,\frac{1}{Be^{h}-1})+o(T(r,f))\\
&\leq\overline{N}(r,\frac{1}{f-a_{1}})+m(r,\frac{1}{f-a_{1}})+o(T(r,f)).
\end{aligned}
\end{eqnarray*}
That is
\begin{align}
N(r,\frac{1}{f-a_{1}})=\overline{N}(r,\frac{1}{f-a_{1}})+o(T(r,f)),
\end{align}
and hence
\begin{align}
\overline{N}(r,\frac{1}{f-a_{2}})=T(r,e^{h})+o(T(r,f)).
\end{align}
\end{proof}

\begin{lemma}\label{28l}\cite{hly}
Let $f$ be a nonconstant meromorphic function on $\mathbb{C}^{n}$, and let $P(f)=a_{0}+a_{1}f+a_{2}f^{2}+\cdots+a_{n}f^{n}$, where $a_{i}$ are  small functions of $f$ for $i=0,1,\ldots,n$ on $\mathbb{C}^{n}$. Then
$$T(r,P(f))=nT(r,f)+S(r,f).$$
\end{lemma}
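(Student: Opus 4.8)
The statement is just the case $Q(f)\equiv 1$ of Lemma~\ref{37l} (with $\max\{p,q\}=n$, using the implicit hypothesis $a_{n}\not\equiv 0$ so that $P$ has degree exactly $n$ in $f$), so the quickest route is simply to quote that lemma. If one prefers a self-contained argument, the plan is to write $T(r,P(f))=m(r,P(f))+N(r,P(f))$ and estimate the two pieces separately, aiming to show $N(r,P(f))=n\,N(r,f)+S(r,f)$ and $m(r,P(f))=n\,m(r,f)+S(r,f)$.

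For the counting function I would argue that every pole of $P(f)$ lies either over a pole of $f$ or over a pole of one of the coefficients $a_{0},\dots,a_{n}$; at a pole $z_{0}$ of $f$ of order $\mu$ at which $a_{n}$ is finite and non-zero, the leading term $a_{n}f^{n}$ has a pole of order exactly $n\mu$ and strictly dominates the lower-order terms, so $P(f)$ has a pole of order $n\mu$ there. The discrepancies — poles of $f$ sitting over zeros of $a_{n}$ or poles of some $a_{i}$, and poles of $P(f)$ that do not lie over poles of $f$ — are localized at the zeros and poles of the coefficients, and a bookkeeping of the orders shows that their total contribution is at most $O\!\bigl(\sum_{i}N(r,a_{i})+\sum_{i}N(r,1/a_{i})\bigr)=S(r,f)$. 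This yields $N(r,P(f))=n\,N(r,f)+S(r,f)$.

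For the proximity function the upper bound is routine: pointwise $|P(f)|\le(n+1)\bigl(\max_{i}|a_{i}|\bigr)\max\{1,|f|\}^{n}$, so $\log^{+}|P(f)|\le n\log^{+}|f|+\sum_{i}\log^{+}|a_{i}|+O(1)$, and integrating over $S_{n}(r)$ gives $m(r,P(f))\le n\,m(r,f)+S(r,f)$. The lower bound is the one place where care is needed, and I expect it to be the main obstacle. I would introduce the threshold $\psi:=2+2\sum_{k=0}^{n-1}|a_{k}/a_{n}|$ and the set $E(r):=\{z\in S_{n}(r):|f(z)|\ge\psi(z)\}$. On $E(r)$ one has $|f|\ge 2$ and $|P(f)|\ge\tfrac12|a_{n}|\,|f|^{n}$, hence $\log^{+}|P(f)|\ge n\log|f|+\log|a_{n}|-\log 2$ there; since on the complementary set $|f|<\psi$, that set contributes at most $m(r,\psi)=S(r,f)$ to $m(r,f)$, so $\int_{E(r)}\log|f|\,\sigma_{n}\ge m(r,f)-S(r,f)$. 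Plugging this in, together with $\int_{E(r)}\log|a_{n}|\,\sigma_{n}\ge -m(r,1/a_{n})=-S(r,f)$, gives $m(r,P(f))\ge n\,m(r,f)-S(r,f)$, hence equality.

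Adding the two estimates yields $T(r,P(f))=n\,T(r,f)+S(r,f)$. The points to watch throughout are that every coefficient satisfies $T(r,a_{i})=S(r,f)$, so that all error terms are genuinely $o(T(r,f))$; that the easy upper bound already makes $T(r,f)$ and $T(r,P(f))$ comparable, so the symbol $S(r,\cdot)$ is unambiguous; and — in the lower bound for $m$ — that $\psi$ is chosen large enough for $a_{n}f^{n}$ to dominate the remaining terms wherever $|f|\ge\psi$ while still being small in the sense that $m(r,\psi)=S(r,f)$. No appeal to the Second Main Theorem or to the difference/shift lemmas is required; the argument is pure $\log^{+}$-bookkeeping against the normalized measure $\sigma_{n}$ on $S_{n}(r)$.
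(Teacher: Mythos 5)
Your proposal is correct. Note first that the paper itself supplies no proof of this lemma: it is quoted from \cite{hly} (and, as you observe, it is also the special case $Q\equiv 1$ of Lemma~\ref{37l}, which the paper likewise cites without proof), so there is no in-paper argument to compare against. What you have written is the classical Valiron--Mohon'ko argument, which is exactly what the cited source uses: Horner-type bookkeeping for the upper bound, the splitting of $S_{n}(r)$ at the threshold $\psi=2+2\sum_{k<n}|a_{k}/a_{n}|$ for the lower bound on $m(r,P(f))$, and local order-counting for $N(r,P(f))$. The only place I would ask you to write out in full is the counting-function step: at a pole of $f$ of multiplicity $\mu$ lying over a zero or pole of some coefficient, the discrepancy between the pole order of $P(f)$ and $n\mu$ need not be bounded by the coefficient orders alone unless you first observe that either the leading term strictly dominates (discrepancy at most $|\mathrm{ord}\,a_{n}|$) or else $\mu$ itself is bounded by the coefficient orders at that point (so the whole contribution $n\mu$ is absorbed into $O\bigl(\sum_{i}N(r,a_{i})+N(r,1/a_{n})\bigr)=S(r,f)$). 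With that case distinction made explicit, the proof is complete; your remark that the easy upper bound makes $T(r,f)$ and $T(r,P(f))$ comparable, so that the error terms are unambiguously $o(T(r,f))$, is exactly the right point to flag.
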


\begin{lemma}\label{291}\cite{y1} 
Suppose $f_{1}, f_{2},\cdots, f_{n}(n\neq2)$   are meromorphic functions on $\mathbb{C}^{n}$ and $g_{1}, g_{2},\cdots, g_{n}$ are entire functions on $\mathbb{C}^{n}$ such that\\
(i) $\sum_{j=1}^{n}f_{j}e^{g_{j}}=0$,\\
(ii) $g_{j}-g_{k}$ are not constants for $1\leq j<k\leq n$,\\
(iii) For $1\leq j\leq n$ and $1\leq h<k\leq n$,
$$T(r,f_{j})=S(r,e^{g_{j}-g_{k}})(r\rightarrow\infty, r\not\in E).$$
Then $f_{j}\equiv0$ for all $1\leq j\leq n$.
\end{lemma}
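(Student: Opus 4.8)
The plan is to prove this by induction on the number $n$ of exponential terms, with the several-variables lemma on the logarithmic derivative (Lemma~\ref{221}) as the main engine: it is what keeps the growth estimates under control each time the identity is differentiated. The base cases are quick. For $n=1$, the relation $f_1e^{g_1}\equiv0$ forces $f_1\equiv0$ since $e^{g_1}$ is zero-free. Two terms is excluded from the statement but is needed inside the induction, so I record it: if one of $f_1,f_2$ vanishes identically so does the other; otherwise $-f_1/f_2=e^{g_2-g_1}$, whence, using $T(r,e^{g_1-g_2})=T(r,e^{g_2-g_1})+O(1)$ and the first main theorem, $T(r,e^{g_1-g_2})\le T(r,f_1)+T(r,f_2)+O(1)$, which by (iii) equals $S(r,e^{g_1-g_2})$, a contradiction --- so $f_1\equiv f_2\equiv0$ here as well.

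For the inductive step I would take $n\ge3$ and assume the conclusion for every shorter exponential identity (the two-term case just treated covers the value $n-1=2$). If some $f_m\equiv0$, then $\sum_{j\ne m}f_je^{g_j}\equiv0$ is a shorter identity inheriting hypotheses (ii) and (iii) verbatim, and induction finishes it; so I may assume $f_j\not\equiv0$ for all $j$. Dividing by $f_ne^{g_n}$ gives $\sum_{j=1}^{n-1}(f_j/f_n)e^{g_j-g_n}\equiv-1$, and applying a single coordinate derivative $\partial/\partial z^{\ell}$ kills the constant, producing
\[
\sum_{j=1}^{n-1}F_j\,e^{g_j-g_n}\equiv0,\qquad F_j:=\partial_{\ell}\!\Big(\frac{f_j}{f_n}\Big)+\frac{f_j}{f_n}\,\partial_{\ell}(g_j-g_n).
\]
The exponents $G_j:=g_j-g_n$ still have nonconstant pairwise differences $G_j-G_h=g_j-g_h$, so (ii) persists; the coefficients are small because $T(r,f_j/f_n)\le T(r,f_j)+T(r,f_n)$, $T(r,\partial_{\ell}(f_j/f_n))\le 2T(r,f_j/f_n)+S(r,f_j/f_n)$, and $\partial_{\ell}(g_j-g_n)=\partial_{\ell}(e^{g_j-g_n})/e^{g_j-g_n}$ is entire with $m(r,\partial_{\ell}(g_j-g_n))=S(r,e^{g_j-g_n})$ by Lemma~\ref{221}; together with (iii) this gives the smallness hypothesis for the reduced identity, and the induction hypothesis yields $F_j\equiv0$ for every $j$ and every $\ell$.

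Then, to close, note that $F_j\equiv0$ for $\partial/\partial z^{\ell}$ says $\partial_{\ell}\big((f_j/f_n)e^{g_j-g_n}\big)\equiv0$; letting $\ell$ run over all coordinates forces $(f_j/f_n)e^{g_j-g_n}\equiv c_j$ for a constant $c_j$. If $c_j\ne0$ then $e^{g_j-g_n}=c_jf_n/f_j$, so $T(r,e^{g_j-g_n})\le T(r,f_j)+T(r,f_n)+O(1)=S(r,e^{g_j-g_n})$ by (iii), a contradiction; hence $c_j=0$, i.e.\ $f_j\equiv0$, against the standing assumption. So not all $f_j$ can be nonzero, which completes the induction.

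The hard part will be the bookkeeping in this differentiation reduction over $\mathbb{C}^{n}$. First, one must verify carefully that the smallness condition (iii) really is inherited by the reduced identity: the terms $\partial_{\ell}(g_j-g_n)$ are controlled only relative to their own exponentials $e^{g_j-g_n}$, so one has to compare the growth of the various $e^{g_h-g_k}$ --- this is exactly where Lemma~\ref{221} is indispensable, and where, if necessary, one works instead with the auxiliary ``$\sum f_je^{g_j}=f_{n+1}$'' formulation so that the set of relevant exponents only shrinks under reduction. Second, one must justify that ``$\partial_{\ell}(\cdot)\equiv0$ for every coordinate $\ell$'' genuinely forces a constant, which is legitimate since a nonconstant entire function on $\mathbb{C}^{n}$ has a nonvanishing partial derivative; alternatively, the entire argument can be carried out on a generic complex line, reducing everything to the classical one-variable Borel lemma. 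The remaining ingredients --- the reductions, the base cases, and integrating $F_j\equiv0$ --- are routine.
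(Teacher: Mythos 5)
The paper offers no proof of this lemma at all --- it is quoted from Yang--Yi \cite{y1} (the several-variables analogue of their classical Borel-type theorem) --- so there is no in-paper argument to compare against; your proposal has to stand on its own. The overall strategy (induction by dividing by $f_ne^{g_n}$ and differentiating) is the classical one, and the base cases, the reduction when some $f_m\equiv0$, the integration of $F_j\equiv0$ over all coordinate directions, and the final growth contradiction are all fine. But the step you yourself flag as ``the hard part'' is a genuine gap, not bookkeeping. The reduced coefficient $F_j$ contains $(f_j/f_n)\,\partial_{\ell}(g_j-g_n)$, and Lemma \ref{221} only gives $T(r,\partial_{\ell}(g_j-g_n))=m\bigl(r,\partial_{\ell}(e^{g_j-g_n})/e^{g_j-g_n}\bigr)=S(r,e^{g_j-g_n})$, i.e.\ smallness relative to the pair $(j,n)$. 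The reduced system's hypothesis (iii) demands $T(r,F_j)=S(r,e^{g_h-g_k})$ for pairs $h<k\le n-1$, and nothing compares $T(r,e^{g_j-g_n})$ with $T(r,e^{g_h-g_k})$: already for $g_1=0$, $g_2=z^{1}$, $g_3=e^{e^{z^{1}}}$ one has $T(r,\partial_{1}(g_1-g_3))$ of exponential growth while $S(r,e^{g_1-g_2})=o(r)$. So the bound you actually possess is not of the form required to invoke the induction hypothesis, and your suggested remedy (an auxiliary formulation in which ``the exponent set only shrinks'') does not help, because the offending term $\partial_{\ell}(g_j-g_n)$ still references the discarded exponent $g_n$.

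The standard way to close this --- and, to my knowledge, how the cited source actually argues --- is not to re-run the induction on the differentiated identity but to pass to $\sum_{j=1}^{n-1}H_j\equiv1$ with $H_j=-(f_j/f_n)e^{g_j-g_n}$ and apply the second-main-theorem--type result on meromorphic functions summing to $1$ (Yang--Yi, Theorems 1.51--1.52, which transfer to $\mathbb{C}^{n}$ once the logarithmic derivative lemma and the characteristic estimates of this paper are in place). There the only comparisons needed are $N(r,1/H_j)+\overline{N}(r,H_j)\le T(r,f_j)+T(r,f_n)+O(1)$ against $T(r,H_k)=T(r,e^{g_k-g_n})+o(T(r,e^{g_k-g_n}))$, and these involve precisely the pairs $(j,n)$ covered by hypothesis (iii). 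One concludes $H_{j_0}\equiv1$ for some $j_0$, hence $T(r,e^{g_{j_0}-g_n})\le T(r,f_{j_0})+T(r,f_n)+O(1)=S(r,e^{g_{j_0}-g_n})$, a contradiction. Either take that route or supply a correct argument for why (iii) is inherited by your reduced system; as written, the inductive step does not go through.
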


\begin{lemma}\label{22l}\cite{hly}
Suppose $f_{1},f_{2}$ are two nonconstant meromorphic functions on $\mathbb{C}^{n}$, then
$$N(r,f_{1}f_{2})-N(r,\frac{1}{f_{1}f_{2}})=N(r,f_{1})+N(r,f_{2})-N(r,\frac{1}{f_{1}})-N(r,\frac{1}{f_{2}}).$$
\end{lemma}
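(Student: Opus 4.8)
The plan is to obtain the identity by applying Jensen's formula three times --- to $f_{1}$, to $f_{2}$, and to the product $f_{1}f_{2}$ --- and subtracting. Recall from the introduction that for a meromorphic function $h$ on $\mathbb{C}^{n}$ with $h(0)\neq 0,\infty$ one has, in the notation fixed in the text ($N(r,1/h):=N_{h}(r,0)$, $N(r,h):=N_{h}(r,\infty)$),
$$N\Big(r,\frac{1}{h}\Big)-N(r,h)=\int_{S_{n}(r)}\log|h(z)|\,\sigma_{n}(z)-\log|h(0)|.$$
Writing this for $h=f_{1}$, $h=f_{2}$ and $h=f_{1}f_{2}$, and using that $\log|f_{1}f_{2}|=\log|f_{1}|+\log|f_{2}|$ holds $\sigma_{n}$-almost everywhere on each sphere $S_{n}(r)$ (the exceptional set, where some $f_{i}$ vanishes or has a pole, is an analytic subvariety of $S_{n}(r)$ of lower dimension, hence $\sigma_{n}$-null) together with $\log|f_{1}(0)f_{2}(0)|=\log|f_{1}(0)|+\log|f_{2}(0)|$, the transcendental integrals and the constants all split additively. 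Subtracting the first two relations from the third yields
$$N\Big(r,\frac{1}{f_{1}f_{2}}\Big)-N(r,f_{1}f_{2})=\Big[N\Big(r,\frac{1}{f_{1}}\Big)-N(r,f_{1})\Big]+\Big[N\Big(r,\frac{1}{f_{2}}\Big)-N(r,f_{2})\Big],$$
and moving the pole-counting terms to the other side gives precisely the assertion.

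Conceptually this is just the additivity of the \emph{signed} divisor: at every point $z\in\mathbb{C}^{n}$ one has $\mathrm{ord}_{z}(f_{1}f_{2})=\mathrm{ord}_{z}(f_{1})+\mathrm{ord}_{z}(f_{2})$, where zeros are counted positively and poles negatively. Since the quantity $N_{h}(r,0)-N_{h}(r,\infty)$ is built from the divisor of $h$ by the linear operations of integrating against $\omega_{n}^{n-1}$ over $\overline{B}_{n}(t)$ and then against $dt/t$ over $(0,r)$, this pointwise additivity passes directly to the counting functions. I would present the Jensen-formula computation above as the main line of argument and record this divisor-theoretic viewpoint as an explanatory remark, since it also makes transparent why the cancellations of zeros of $f_{1}$ against poles of $f_{2}$ --- which do alter $N(r,1/(f_{1}f_{2}))$ and $N(r,f_{1}f_{2})$ individually --- leave the \emph{difference} untouched.

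The one point needing a little care, and the only place where the bare statement of Jensen's formula quoted in the introduction does not literally apply, is the normalization at the origin: one of $f_{1}$, $f_{2}$, $f_{1}f_{2}$ may have a zero or a pole at $0$. The standard remedy is to use the general form of Jensen's formula with the correction term recording the order of $h$ at $0$; these correction terms are themselves additive in $h$ and therefore cancel upon subtraction, exactly as the constants $\log|h(0)|$ do above. Equivalently one may first translate, replacing each $f_{i}(z)$ by $f_{i}(z-w)$ for a generic $w\in\mathbb{C}^{n}$ avoiding the zero and pole sets near the origin, prove the identity there, and observe that both sides transform consistently. Beyond this bookkeeping there is no obstacle; the lemma is elementary and, as indicated, is quoted from \cite{hly}.
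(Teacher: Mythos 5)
The paper offers no proof of this lemma at all --- it is quoted verbatim from \cite{hly} --- so there is nothing internal to compare your argument against. Your argument is the standard one and is correct: Jensen's formula applied to $f_{1}$, $f_{2}$ and $f_{1}f_{2}$, together with the additivity of $\log|\cdot|$ (equivalently, of the signed divisor $\mathrm{div}(f_{1}f_{2})=\mathrm{div}(f_{1})+\mathrm{div}(f_{2})$), gives exactly the stated identity, and you correctly isolate the two points that need care, namely that zeros of one factor may cancel poles of the other (which is why only the \emph{difference} of counting functions is additive) and the normalization at the origin. The only cosmetic remark is that in $\mathbb{C}^{n}$ the additivity is best phrased in terms of multiplicities along irreducible components of the divisors rather than a pointwise $\mathrm{ord}_{z}$, but this does not affect the argument.
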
\section{The proof of Theorem 1 }
Suppose
\begin{eqnarray}
\varphi=\frac{L(f)(f-g)}{(f-a_{1})(f-a_{2})},
\end{eqnarray}
and
\begin{eqnarray}
\psi=\frac{L(g)(f-g)}{(g-a_{1})(g-a_{2})}.
\end{eqnarray}
 Easy to know that $\varphi\not\equiv0$ because of $f\not\equiv g $, and $N(r,\varphi)=o(T(r,f))$.  By Lemma 3.1 and Lemma 3.5,  we have
\begin{eqnarray*}
\begin{aligned}
&T(r,\varphi)=m(r,\varphi)=m(r,\frac{L(f)(f-g)}{(f-a_{1})(f-a_{2})})+o(T(r,f))\notag\\
&=m(r,\frac{L(f)f}{(f-a_{1})(f-a_{2})}\frac{f-\sum_{i=0}^{n}b_{i}f^{(k_{i})}_{i\eta}}{f})+m(r,\frac{L(f)fb_{-1}}{(f-a_{1})(f-a_{2})})\\
&+o(T(r,f))\leq o(T(r,f)).
\end{aligned}
\end{eqnarray*}
That is
\begin{align}
T(r,\varphi)=o(T(r,f)).
\end{align}
Let $d=a_{1}-j(a_{1}-b_{1})(j\neq0,1)$. Obviously, by Lemma 3.1 and  Lemma 3.5, we obtain

\begin{align}
 &m(r,\frac{1}{f(z)})=m(r,\frac{L(f(z))f(z)}{(f(z)-a_{1})(f(z)-a_{2})}\frac{f(z)-\sum_{i=0}^{n}b_{i}f^{(k_{i})}(z+i\eta)}{f(z)})\notag\\
&+m(r,\frac{L(f(z))f(z)b_{-1}}{(f(z)-a_{1})(f(z)-a_{2})})+o(T(r,f))\notag\\
&\leq 2m(r,\frac{L(f(z))f(z)}{(f(z)-a_{1})(f(z)-a_{2})})+m(r,\frac{f(z)-\sum_{i=0}^{n}b_{i}f^{(k_{i})}(z)}{f(zi\eta)})\notag\\
&+o(T(r,f))=o(T(r,f)),
\end{align}
and hence
\begin{align}
 m(r,\frac{1}{f(z)-d})&=m(r,\frac{L(f(z))(f(z)-g(z))}{(\varphi(z) (f(z)-a_{1})(f(z)-a_{2})(f(z)-d)})\notag\\
&\leq m(r,\frac{L(f(z))f(z)}{(f(z)-a_{1})(f(z)-a_{2})(f(z)-d)})\notag\\
&+ m(r,1-\frac{g(z)}{f(z)})+o(T(r,f))=o(T(r,f)),
\end{align}

Set
\begin{align}
\phi=\frac{L(g)}{(g-a_{1})(g-a_{2})}-\frac{L(f)}{(f-a_{1})(f-a_{2})}.
\end{align}
We  discuss  two cases.\\

{\bf Case 1}\quad $\phi\equiv0$.  Integrating $\phi$ which leads to
$$\frac{g-a_{2}}{g-a_{1}}=C\frac{f-a_{2}}{f-a_{1}},$$
where $C$ is a nonzero constant.\\

If $C=1$, then $f=g$. If $C\neq1$, then from above, we have
$$\frac{a_{1}-a_{2}}{g-a_{1}}\equiv \frac{(C-1)f-Ca_{2}+a_{1}}{f-a_{1}},$$
and
$$T(r,f)=T(r,g)+o(T(r,f)).$$
It follows that $N(r,\frac{1}{f-\frac{Ca_{2}-a_{1}}{C-1}})=N(r,\frac{1}{a_{1}-a_{2}})=o(T(r,f))$. Then by Lemma 3.6,
\begin{eqnarray*}
\begin{aligned}
2T(r,f)&\leq \overline{N}(r,f)+\overline{N}(r,\frac{1}{f-a_{1}})+\overline{N}(r,\frac{1}{f-a_{2}})+\overline{N}(r,\frac{1}{f-\frac{Ca_{2}-a_{1}}{C-1}})+o(T(r,f))\\
&\leq \overline{N}(r,\frac{1}{f-a_{1}})+\overline{N}(r,\frac{1}{f-a_{2}})+o(T(r,f)),
\end{aligned}
\end{eqnarray*}
that is $2T(r,f)\leq \overline{N}(r,\frac{1}{f-a_{1}})+\overline{N}(r,\frac{1}{f-a_{2}})+o(T(r,f))$,
which contradicts with (3.11).

{\bf Case 2} \quad $\phi \not\equiv0$. By (3.11), (4.3) and (4.6), we can obtain
\begin{align}
m(r,f)&=m(r,f-g)+o(T(r,f))\notag\\
&=m(r,\frac{\phi(f-g)}{\phi})+o(T(r,f))=m(r,\frac{\psi-\varphi}{\phi})+o(T(r,f))\notag\\
&\leq T(r,\frac{\phi}{\psi-\varphi})+o(T(r,f))\leq T(r,\psi-\varphi)+T(r,\phi)+o(T(r,f))\notag\\
&\leq T(r,\psi)+T(r,\phi)+o(T(r,f))\notag\\
&\leq T(r,\psi)+\overline{N}(r,\frac{1}{f-a_{2}})+o(T(r,f)).
\end{align}
On the other hand,
\begin{align}
T(r,\psi)&=T(r,\frac{L(g)(f-g)}{(g-a_{1})(g-a_{2})})\notag\\
&=m(r,\frac{L(g)(f-g)}{(g-a_{1})(g-a_{2})})+o(T(r,f))\notag\\
&\leq m(r,\frac{L(g)}{g-a_{2}})+m(r,\frac{f-g}{g-a_{1}})\notag\\
&\leq m(r,\frac{1}{f-a_{1}})+o(T(r,f))=\overline{N}(r,\frac{1}{f-a_{2}})+o(T(r,f)),
\end{align}
hence combining  (4.7) and (4.8), we obtain
\begin{align}
 T(r,f)\leq 2\overline{N}(r,\frac{1}{f-a_{2}})+o(T(r,f)).
\end{align}
Next, Case 2 is  divided into two subcases.

{\bf Subcase 2.1}\quad $a_{1}=G$, where $G$ is defined as (3.2) in Lemma 3.7. Then by (3.10) and Lemma 3.1 we can get
\begin{align}
 m(r,Be^{h})=m(r,\frac{g-G}{f-a_{1}})=o(T(r,f)).
\end{align}
Then by (3.19), (4.8)  and (4.9) we can have $T(r,f)=o(T(r,f))$, and thus a contradiction.\\

{\bf Subcase 2.2} \quad $a_{2}=G$. Then by (3.16), (3.19), (4.9) and Lemma 3.1, we get
\begin{align}
 T(r,f)&\leq m(r,\frac{1}{f-a_{1}})+\overline{N}(r,\frac{1}{g-G})+o(T(r,f))\notag\\
 &\leq m(r,\frac{1}{g-G})+\overline{N}(r,\frac{1}{g-G})+o(T(r,f))\notag\\
 &\leq T(r,g)+o(T(r,f)).
\end{align}
From the fact that
\begin{align}
 T(r,g)\leq T(r,f)+o(T(r,f)),
\end{align}
which follows from (4.11) that
\begin{align}
 T(r,f)=T(r,g)+S(r,f).
\end{align}
By Lemma 3.1, Lemma 3.6, (3.11) and (4.13), we have
\begin{eqnarray*}
\begin{aligned}
2T(r,f)&\leq 2T(r,g)+o(T(r,f))\\
&\leq\overline{N}(r,\frac{1}{g-a_{1}})+\overline{N}(r,\frac{1}{g-G})+\overline{N}(r,\frac{1}{g-d})+o(T(r,f))\\
&\leq \overline{N}(r,\frac{1}{f-a_{1}})+\overline{N}(r,\frac{1}{f-a_{2}})+T(r,\frac{1}{g-d})-m(r,\frac{1}{g-d})+o(T(r,f))\\
&\leq T(r,f)+T(r,g)-m(r,\frac{1}{g-d})+o(T(r,f))\\
&\leq 2T(r,f)-m(r,\frac{1}{g-d})+o(T(r,f)).
\end{aligned}
\end{eqnarray*}
Thus
\begin{eqnarray}
m(r,\frac{1}{g-d})=o(T(r,f)).
\end{eqnarray}
From the First Fundamental Theorem of Nevanlinna, Lemma 3.1, Lemma 3.2, (4.1)-(4.2), (4.13)-(4.14) and the fact that $f$ is a transcendental  entire function of $\rho_{2}(f)<1$, we obtain
\begin{eqnarray*}
\begin{aligned}
m(r,\frac{f-d}{g-d})&\leq m(r,\frac{f}{g-d})+m(r,\frac{d}{g-d})+o(T(r,f))\\
&\leq T(r,\frac{f}{g-d})-N(r,\frac{f}{g-d})+o(T(r,f))\\
&=m(r,\frac{g-d}{f})+N(r,\frac{g-d}{f})-N(r,\frac{f}{g-d})\\
&+o(T(r,f))\leq N(r,\frac{1}{f})-N(r,\frac{1}{g-d})+o(T(r,f))\\
&=T(r,\frac{1}{f})-T(r,\frac{1}{g-d})+o(T(r,f))\\
&=T(r,f)-T(r,g)+o(T(r,f))=o(T(r,f)).
\end{aligned}
\end{eqnarray*}
Thus we get
\begin{eqnarray}
m(r,\frac{f-d}{g-d})=o(T(r,f)).
\end{eqnarray}
It's easy to see that $N(r,\psi)=o(T(r,f))$.  And we rewrite (4.2) as
\begin{eqnarray}
\psi=[\frac{a_{1}-d}{a_{1}-a_{2}}\frac{L(g)}{g-a_{1}}-\frac{a_{2}-d}{a_{1}-a_{2}}\frac{L(g)}{g-a_{2}}][\frac{f-d}{g-d}-1].
\end{eqnarray}
Then by  (4.15) and (4.16) we can get
\begin{eqnarray}
T(r,\psi)=m(r,\psi)+N(r,\psi)=o(T(r,f)).
\end{eqnarray}
By (4.13), (4.7), and (4.17) we get
\begin{eqnarray}
\overline{N}(r,\frac{1}{f-a_{1}})=o(T(r,f)).
\end{eqnarray}
Moreover, by (3.11), (4.13) and (4.18), we have
\begin{eqnarray}
m(r,\frac{1}{g-G})=o(T(r,f)),
\end{eqnarray}
which implies
\begin{eqnarray}
\overline{N}(r,\frac{1}{f-a_{2}})=m(r,\frac{1}{f-a_{2}})\leq m(r,\frac{1}{g-G})=o(T(r,f)).
\end{eqnarray}
Then by (3.11) we obtain $T(r,f)=o(T(r,f))$, and thus a contradiction.\\

{\bf Subcase 2.3} $a_{1}\not\equiv G, a_{2}\not\equiv G$. So by (3.16), (3.19), (4.9) and Lemma 3.6, we can get
\begin{eqnarray*}
\begin{aligned}
T(r,f)&\leq 2m(r,\frac{1}{f-a_{1}})+o(T(r,f))\leq2m(r,\frac{1}{g-G})\\
&+S(r,f)=2T(r,g)-2N(r,\frac{1}{g-G})+o(T(r,f))\\
&\leq\overline{N}(r,\frac{1}{g-a_{1}})+\overline{N}(r,\frac{1}{g-a_{2}})+\overline{N}(r,\frac{1}{g-G})\\
&-2N(r,\frac{1}{g-G})+o(T(r,f))\\
&\leq T(r,f)-N(r,\frac{1}{g-G})+o(T(r,f)),
\end{aligned}
\end{eqnarray*}
which deduces that
\begin{align}
N(r,\frac{1}{g-G})=o(T(r,f)).
\end{align}
It follows from Lemma 3.6 that
\begin{eqnarray*}
\begin{aligned}
T(r,g)&\leq \overline{N}(r,\frac{1}{g-G})+\overline{N}(r,\frac{1}{g-a_{1}})+o(T(r,f))\\
&\leq \overline{N}(r,\frac{1}{g-a_{1}})+o(T(r,f))\\
&\leq T(r,g)+o(T(r,f)),
\end{aligned}
\end{eqnarray*}
which implies that
\begin{align}
T(r,g)=\overline{N}(r,\frac{1}{g-a_{1}})+o(T(r,f)).
\end{align}
Similarly
\begin{align}
T(r,g)=\overline{N}(r,\frac{1}{g-a_{2}})+o(T(r,f)).
\end{align}
Then by (3.11) we get
\begin{align}
T(r,f)=2T(r,g)+o(T(r,f)).
\end{align}
Easy to see from (4.6) that
\begin{align}
T(r,\phi)=N(r,\phi)+S(r,f)\leq\overline{N}(r,\frac{1}{g-a_{2}})+S(r,f).
\end{align}
We claim that
\begin{align}
T(r,\phi)=\overline{N}(r,\frac{1}{g-a_{2}})+S(r,f).
\end{align}
Otherwise, 
\begin{align}
T(r,\phi)<\overline{N}(r,\frac{1}{g-a_{2}})+S(r,f).
\end{align}
We can  deduce from (3.11), Lemma 3.1, Lemma 3.5 and Lemma 3.12 that
\begin{eqnarray*}
\begin{aligned}
T(r,\psi)&=T(r,\frac{L(g)(f-g)}{(g-a_{1})(g-a_{2})})=m(r,\frac{L(g)(f-g)}{(g-a_{1})(g-a_{2})})+S(r,f)\notag\\
&\leq m(r,\frac{L(g)}{g-a_{1}})+m(r,\frac{f-a_{2}}{g-a_{2}}-1)\notag\\
&\leq m(r,\frac{g-a_{2}}{f-a_{2}})+N(r,\frac{g-a_{2}}{f-a_{2}})-N(r,\frac{f-a_{2}}{g-a_{2}})+S(r,f)\\
&\leq m(r,\frac{1}{f-a_{2}})+N(r,\frac{1}{f-a_{2}})-N(r,\frac{1}{g-a_{2}})+S(r,f)\\
&\leq T(r,f)-\overline{N}(r,\frac{1}{g-a_{2}})+S(r,f)\leq \overline{N}(r,\frac{1}{f-a_{1}})+S(r,f),
\end{aligned}
\end{eqnarray*}
which is
\begin{align}
T(r,\psi)\leq \overline{N}(r,\frac{1}{f-a_{1}})+S(r,f).
\end{align}
Then combining (3.11), (4.28) and the proof of (4.7), we obtain
\begin{eqnarray*}
\begin{aligned}
&\overline{N}(r,\frac{1}{f-a_{1}})+\overline{N}(r,\frac{1}{f-a_{2}})=T(r,f)+S(r,f)\notag\\
&\leq \overline{N}(r,\frac{1}{f-a_{1}})+T(r,\phi)+S(r,f),
\end{aligned}
\end{eqnarray*}
that is
\begin{align}
\overline{N}(r,\frac{1}{g-a_{2}})\leq T(r,\phi)+S(r,f),
\end{align}
a contradiction. Similarly, we can also obtain
\begin{align}
T(r,\psi)=\overline{N}(r,\frac{1}{g-a_{1}})+S(r,f).
\end{align}
By Lemma 3.7, if
\begin{align}
g=He^{p}+G,
\end{align}
where $H\not\equiv0$ is a small function of $e^{p}$.\\

Rewrite (4.2) as
\begin{align}
\phi\equiv\frac{L(g)(f-a_{1})(f-a_{2})-L(f)(g-a_{1})(g-a_{2})}{(f-a_{1})(f-a_{2})(g-a_{1})(g-a_{2})}.
\end{align}
Combing (3.1), (4.31) and (4.32),  we can set
\begin{align}
P&=L(g)(f-a_{1})(f-a_{2})-L(f)(g-a_{1})(g-a_{2})\notag\\
&=\sum_{i=0}^{5}\alpha_{i}e^{ip},
\end{align}
and
\begin{align}
Q&=(f-a_{1})(f-a_{2})(g-a_{1})(g-a_{2})\notag\\
&=\sum_{l=0}^{6}\beta_{l}e^{lp},
\end{align}
where $\alpha_{i}$ and $\beta_{l}$ are small functions of $e^{p}$, and $\alpha_{5}\not\equiv0$, $\beta_{6}\not\equiv0$.

If $P$ and $Q$ are two mutually prime polynomials in $e^{p}$, then by Lemma 3.8 we can get $T(r,\phi)=6T(r,e^{p})+o(T(r,f))$.  It follows from (3.19), (4.23)-(4.25) that $T(r,f)=o(T(r,f))$, and hence a contradiction.\\

If $P$ and $Q$ are  not two mutually prime polynomials in $e^{p}$, it's easy to see that the degree of $Q$ is large than $P$.\\
According to (4.31), (4.33), (4.34) and by simple calculation,  we must have
\begin{align}
\phi=\frac{C}{g-a_{2}},
\end{align}
where $C_{1}\not\equiv0$ is a small function of $f$.\\
Put (4.35) into (4.6) we have
\begin{align}
\frac{Cg-L(g)-Ca_{1}}{(g-a_{1})(g-a_{2})}\equiv\frac{-L(f)}{(f-a_{1})(f-a_{2})}.
\end{align}
By (4.6), we claim that $CHe^{p}\equiv (a_{1}-a_{2})(H'+p'H)e^{p}-(a'_{1}-a'_{2})He^{p}$. Otherwise, combining (4.6), (4.31),(4.36) and Lemma 3.10, we can get $T(r,e^{p})=o(T(r,f))$. It follows from  (3.19) and (4.9) that $T(r,f)=o(T(r,f))$, and hence a contradiction. Then substituting (4.31) into (4.6),  we have
\begin{align}
\psi=\frac{(CHe^{p}+F)(Ae^{p}-1)}{(He^{p}+G-a_{2})},
\end{align}
where $F=(G'-a'_{1})(a_{1}-a_{2})-(G-a_{1})(a'_{1}-a'_{2})$.  Put
$$R=ACHe^{2p}+(AF-CH)e^{p}-F,$$
$$S=He^{p}+G-a_{2}.$$
If $R$ and $S$ are two mutually prime polynomials in $e^{p}$, then by Lemma 3.8 we can get $T(r,\psi)=2T(r,e^{p})+o(T(r,f))$. Then by (3.19) and (4.8)-(4.9), we can get $T(r,f)=o(r,f)$. Therefore, $R$ and $S$ are not two mutually prime polynomials in $e^{p}$. (4.37) implies
\begin{align}
\psi=CAe^{p}, H=-A(G-a_{2}).
\end{align}

It follows from (4.37)-(4.38) that
\begin{align}
N(r,\frac{1}{CHe^{p}+F})=o(T(r,f)).
\end{align}

We claim that $F\equiv0$. Otherwise, if $F\not\equiv0$, then by (4.33), (4.34), and Lemma 3.6,
\begin{align}
T(r,e^{p})\leq \overline{N}(r,e^{p})+\overline{N}(r,\frac{1}{e^{p}})+\overline{N}(r,\frac{1}{e^{p}+F/CH})+o(T(r,f))=o(T(r,f)).
\end{align}
(3.19) and (4.8) deduce that $T(r,f)=o(T(r,f))$, and hence a contradiction.

Due to (4.31), (4.36) and (4.38), we can get
\begin{align}
H\equiv a_{2}A, \quad G\equiv0.
\end{align}
And hence
\begin{align}
g\equiv a_{2}Ae^{p},
\end{align}
\begin{align}
g-a_{2}=a_{2}(Ae^{p}-1).
\end{align}
Furthermore, we can deduce from (3.1) and (4.42) that
\begin{align}
f\equiv a_{2}A^{2}e^{2p}-a_{1}Ae^{p}+a_{1}.
\end{align}
Since $f$ and $g$ share $a_{2}$ IM, by (4.23)-(4.24) and (4.43)-(4.44) we obtain
\begin{align}
f-a_{2}&\equiv a_{2}A^{2}e^{2p}-a_{1}Ae^{p}+a_{1}-a_{2}\notag\\
&=a_{2}(Ae^{p}-1)^{2}.
\end{align}
It follows from $F\equiv0$, (4.44) and (4.45) that
\begin{align}
a_{1}\equiv2a_{2}.
\end{align}

By (4.46) and the fact that
 $$CHe^{p}\equiv (a_{1}-a_{2})(H'+p'H)e^{p}-(a'_{1}-a'_{2})He^{p},$$
 we get
\begin{align}
C\equiv \frac{A'}{A}+a_{2}p'.
\end{align}

it follows from (3.1), (4.36), (4.46) and (4.47) that
\begin{align}
A=a_{2}= 1, C=p'
\end{align}
and therefore
\begin{align}
a_{1}= 2.
\end{align}

\begin{align}
g(z)=e^{p},
\end{align}
where $c\neq0$ and $a$ are two finite constants.\\
Thus, by (3.1) and(4.48)-(4.50), we obtain
\begin{align}
f(z)=e^{2p}-2e^{p}+2.
\end{align}

If $m(r,e^{p})=m(r,e^{h})+O(1)=o(T(r,f))$. Then by (3.19) and (4.9), we deduce $T(r,f)=o(T(r,f))$, and thus a contradiction.\\

This completes the proof of Theorem 1.

\section{The proof of Corollary 1 }
By Theorem 1,   it reduces to the case that $f$ and $(\Delta_{\eta}^{n}f)^{(k)}$ share $2$ CM and $1$ IM. So in Lemma 3.6,
\begin{align}
L(f)=f', \quad L(g)=g'.
\end{align}
We claim that $p$ is a polynomial with $deg p=1$. Otherwise,  by (3.6) and (4.50),  we have
\begin{align}
\sum_{i=1}^{n}C_{i}e^{p(z+i\eta)-p(z)}-e^{Q}\equiv0,
\end{align}
where $e^{Q}=1$, and $C_{i}\not\equiv0(i=1,\ldots, n)$ are small functions of $e^{p(z+i\eta)-p(z)-Q}$ for all $i=1,\ldots, n$. Then by Lemma 3.11, we know that $C_{i}\equiv0$ for all $i=1,\ldots, n$, a contradiction. Hence,  according to  $c(\Delta_{\eta}^{n}f)^{(k)}\equiv(\Delta_{\eta}^{n}f)^{(k+1)}$, we know that
\begin{align}
f(z)=e^{2(cz+a)}-2e^{cz+a}+2.
\end{align}

It follows from  (3.6), (4.46) and (4.49) that
\begin{align}
-2(e^{c\eta}-1)^{n}=1.
\end{align}

It follows from above that
\begin{align}
e^{c\eta}=(-2)^{-\frac{1}{n}}+1.
\end{align}
But we can not get (3.2) from (5.5), a contradiction.

\section{proof of Corollary 2}
Suppose that $f\not\equiv g$.  Since $f$ and $g$ share $a_{1}$  and $a_{2}$ CM, we can get $T(r,\phi)=S(r,f)$ in {\bf Case 2} in Theorem 1. Then by (4.8) and (4.9), we know
\begin{align}
T(r,f)\leq \overline{N}(r,\frac{1}{f-a_{2}})+o(T(r,f)).
\end{align}
And hence
\begin{align}
T(r,f)&\leq \overline{N}(r,\frac{1}{f-a_{2}})+o(T(r,f))\notag\\
&=\overline{N}(r,\frac{1}{g-a_{2}})+o(T(r,f))\notag\\
&\leq T(r,g)+o(T(r,f)).
\end{align}
Then by (3.12), we have (3.13). According to a similar method of {\bf Subcases 2.2}, we can  obtain a contradiction.

\

{\bf Conflict of Interest}  The author declares that there is  no conflict of interest regarding the publication of this paper.

\

{\bf Acknowledgements} The author would like to thank to anonymous referees for their helpful comments.

\

{\bf Data availability statement}  There is no associated data in the paper. And the author declares that data will be shared with Research Square for the delivery of the author dashboard.



\begin{thebibliography}{99}
\bibitem{cao}  T. B. Cao,  \emph{Difference analogues of the second main theorem for meromorphic functions in several complex variables},
Math Nachr.  287 (2014), 530-545.

\bibitem{cx}  T. B. Cao,  L. Xu \emph{Logarithmic difference lemma in several complex variables
and partial difference equations},
Annali di Matematica Pura ed Applicata.  199 (2020), 767-794.

\bibitem{cy}  Z. X. Chen, H. X. Yi,  \emph{On Sharing Values of Meromorphic Functions and Their Differences},  Res. Math. 63 (2013),  557-565.


\bibitem{cf1}  Y. M. Chiang, S. J. Feng,
\emph{ On the Nevanlinna characteristic of $f(z+\eta) $ and
difference equations in the complex plane}, Ramanujan J. 16 (2008), no. 1,
105-129.

\bibitem{cf2} Y. M. Chiang, S. J. Feng, \emph{ On the growth of logarithmic differences, difference
quotients and logarithmic derivatives of meromorphic functions},
Trans. Amer. Math. Soc. 361 (2009), 3767-3791.



\bibitem{cc}   N. Cui, Z. X. Chen,  \emph{The conjecture on unity of meromorphic functions concerning their differences},  J. Diff. Equ. Appl. 22 (2013),  1452-1471.

\bibitem{g} G. G. Gundersen, \emph{ Meromorphic functions that share three or four values}, J. London Math. Soc. 20(1979), 457-466.

\bibitem{h1} R. G. Halburd, R. J. Korhonen,
\emph{ Difference analogue of the lemma on the logaritheoremic derivative with
applications to difference equations}, J. Math. Anal. Appl. 314 (2006),
no. 2, 477-487.

\bibitem{h2} R. G. Halburd, R. J. Korhonen,
\emph{ Nevanlinna theory for the difference operator},
Ann. Acad. Sci. Fenn. Math. 31 (2006), no. 2, 463-478.



\bibitem{h3} W. K. Hayman,\emph{ Meromorphic functions}, Oxford Mathematical Monographs Clarendon Press, Oxford 1964.

\bibitem{hkl} J. Heittokangas, R. Korhonen,  I. Laine, J. Rieppo, \emph{ Uniqueness of meromorphic functions sharing values with their shifts}, Complex Var. Elliptic Equ. 56 (2011), 81-92.

\bibitem{h}H$\ddot{o}$rmander, L., An introduction to complex analysis in several variables, Van Nostrand.
Princeton, N.J. 1966.

\bibitem{hy2}  P.C. Hu, C. C. Yang, \emph{Uniqueness of meromorphic functions on $\mathbf{C}^{m}$}, Complex
Variables 30(1996), 235-270.

\bibitem{hy3}  P.C. Hu, C. C. Yang, \emph{Further results on factorization of
meromorphic solutions of partial differential equations}, Results in Mathematics, 30 (1996), 310-320.

\bibitem{hly}  P.C. Hu, P. Li, C. C. Yang, \emph{Unicity of Meromorphic Mappings}, Berlin, Germany: Springer Science and Business Media,
2013.

\bibitem{h4} X.H. Huang, \emph{Unicity on entire function concerning its differential-difference operators}. Res. Math. Accepted.

\bibitem{h5} X.H. Huang, \emph{Unicity on entire function concerning its differential polynomials in Several complex variables}.  arXiv:2009.08066v6.

\bibitem{hf} X.H. Huang, M.L. Fang, \emph{Unicity of Entire Functions Concerning Their Shifts and Derivatives}. Comput. Methods Funct. Theory. Published (2021)



\bibitem{k} R. Korhonen, \emph{A difference Picard theorem for meromorphic functions of several variables}, Comput Methods Funct
Theory 12 (2012),  343-361.

\bibitem{ly2}  Li P,  Yang C C.   \emph{Value sharing of an entire function and its derivatives},
J. Math. Soc. Japan. 51 (1999),  781-799.


\bibitem{lr} X. L. Liu, R. Korhonen, \emph{ On the periodicity of transcendental entire functions},
 Bull. Aust. Math. Soc. 101 (2020), 453-465.

 \bibitem{lyf}   D. Liu, D. G. Yang , M. L. Fang, \emph{Unicity of entire functions concerning shifts and difference operators}, Abstr. Appl. Anal. 2014, 5 pp.


\bibitem{ll} F. L$\ddot{u}$, W. R. L$\ddot{u}$, \emph{ meromorphic functions sharing three values with their difference operators}, Comput. Methods Funct. Theory 17 (2017), no. 3,  395-403.


\bibitem{r} M. Ru, \emph{Nevanlinna Theory and Its Relation to Diophatine Approximation}, World Scientific Publishing
Co, Singapore, 2001.

\bibitem{ruy} L. A. Rubel, C. C. Yang, \emph{ Values shared by an entire function and its derivative},
 Lecture Notes in Math. Springer, Berlin, 599 (1977), 101-103.

\bibitem{v}  A. Vitter, \emph{The lemma of the logarithmic derivative in several complex variables}, Duke Math.
J. 44 (1977), 89-104.


\bibitem{y1} C. C. Yang, H. X. Yi,
\emph{ Uniqueness theory of meromorphic functions}, Kluwer Academic Publishers Group, Dordrecht, 2003.

\bibitem{y2} L. Yang, \emph{ Value Distribution Theory}, Springer-Verlag, Berlin, 1993.

\bibitem{y3} K, Yamanoi, \emph{ The second main theorem for small functions and related problems},  Acta Math. 192 (2004), no. 2, 225-294.


\bibitem{y3} Z. Ye, \emph{A sharp form of Nevanlinna¡¯s second main  theorem for several complex variables},
Math. Z. 222 (1996), 81-95.



















\end{thebibliography}
\end{document}